\title{Hilbert's Tenth Problem for rational function fields over p-adic fields}
\author{
Claudia Degroote\thanks{Ph.~D.~fellow of the Research Foundation --- Flanders (FWO).
\textbf{Address:} Ghent University, Department of Mathematics, Kr{\ij}gs\-laan 281, 9000 Gent, Belgium.
\textbf{E-mail:} cdegroote@cage.ugent.be.}
\and
Jeroen Demeyer\thanks{Postdoctoral Fellow of the Research Foundation --- Flanders (FWO).
\textbf{Address:} Ghent University, Department of Mathematics, Kr{\ij}gs\-laan 281, 9000 Gent, Belgium.
\textbf{E-mail:} jdemeyer@cage.ugent.be.}
}
\newcommand{\N}{\mathbb{N}}
\newcommand{\R}{\mathbb{R}}
\newcommand{\Q}{\mathbb{Q}}
\newcommand{\Z}{\mathbb{Z}}
\newcommand{\Ok}{\mathcal{O}}
\newcommand{\I}{\mathcal{I}}
\newcommand{\Leg}[2]{\left(\frac{#1}{#2}\right)}
\newcommand{\al}{\alpha}
\newcommand{\eps}{\varepsilon}
\newcommand{\pr}{\mathfrak{p}}
\newcommand{\pf}[2]{\langle #1,#2 \rangle}
\newcommand{\form}[1]{\langle #1 \rangle}
\newcommand{\squares}[1]{#1^{\ast2}}
\newcommand{\sqc}[1]{#1^{\ast}/\squares{#1}}
\newcommand{\inv}{^{-1}}
\newcommand{\orthosum}{\mathbin{\bot}}
\DeclareMathOperator{\degt}{\deg^{\!\dagger}}
\newtheorem{st}{Theorem}[section]
\newtheorem{prop}[st]{Proposition}
\newtheorem{lem}[st]{Lemma}
\newtheorem{gev}[st]{Corollary}
\theoremstyle{definition}
\newtheorem{defi}[st]{Definition}
\theoremstyle{remark}
\newtheorem*{bew}{\normalfont\bfseries Proof}
\theoremstyle{remark}
\newtheorem*{acknowledgements}{\normalfont\itshape Acknowledgements}
\begin{document}

\maketitle

\begin{abstract}
Let $K$ be a p-adic field (a finite extension of some $\Q_p$)
and let $K(t)$ be the field of rational functions over $K$.
We define a kind of quadratic reciprocity symbol for polynomials over $K$ and
apply it to prove isotropy for a certain class of quadratic forms over $K(t)$.
Using this result, we give an existential definition for the predicate ``$\varv_t(x) \geq 0$'' in $K(t)$.
This implies undecidability of diophantine equations over $K(t)$.
\end{abstract}

\section{Introduction}

In \cite{kim-roush-padic}, Kim and Roush proved undecidability for diophantine equations
for rational function fields over a subfield of a p-adic field of odd residue characteristic.
Our interest went out to improving their methods so they would also work in the case of residue characteristic 2.
While the present paper follows some of the structure of Kim and Roush's proof,
our proof is a general one that handles both $p$ odd and $p=2$.
We also simplify many of the methods of Kim and Roush
by working more in the context of the theory of quadratic forms. 
However, we only deal with p-adic fields (as opposed to subfields of p-adic fields).

This result fits in with other results concerning Hilbert's Tenth Problem.
In his famous list of 23 problems, Hilbert asked for an algorithm that solves the following question:
given a polynomial with integer coefficients in any number of unknowns, does this polynomial have an integer zero?
In 1970 Matiyasevich proved, building on earlier work of Davis, Putnam and Robinson,
that recursively enumerable sets are Diophantine (called the DPRM theorem).
From this it followed that there is no algorithm to decide whether a polynomial 
over the integers has integer zeros.
The undecidability of diophantine equations has been shown for many other rings and fields,
\cite{pheidas-zahidi} gives an overview of what is known.

For rational function fields $K(t)$, to prove diophantine undecidability
(i.e.~a negative answer to Hilbert's Tenth Problem)
it suffices to give an existential definition of the valuation ring at $t$.
This method was first used by Denef (\cite{denef-real}) in characteristic zero,
in general the result can be stated as follows:

\begin{st}(\cite[Theorem 2.3]{pheidas-zahidi})\label{denef}
Let $K$ be a field and let $K_0$ denote the prime subfield of $K$.
Let $t$ be a transcendental element over $K$.
Suppose that there exists an existential formula $\psi(x)$ such that the following hold:
\begin{enumerate}
\item for every $x \in K_0(t)$ such that $\varv_t(x) \geq 0$, $\psi(x)$ holds;
\item for every $x \in K(t)$ such that $\psi(x)$ holds, we have $\varv_t (x) \geq 0$.
\end{enumerate}
Then the existential theory of $K(t)$ is undecidable. 
\end{st}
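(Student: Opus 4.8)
The plan is to reduce to our setting a Diophantine problem already known to be undecidable, namely Hilbert's Tenth Problem for the polynomial ring $K_0[t]$: this is unsolvable by Denef's theorems in characteristic $0$ (\cite{denef-real}) and by the corresponding results in characteristic $p$ (see the survey \cite{pheidas-zahidi}). It therefore suffices to exhibit an existential formula over $K(t)$ that defines $K_0[t]$, since then any polynomial system over $K_0[t]$ translates mechanically into an existential sentence over $K(t)$ that is true iff the original system has a solution; a decision procedure for the existential theory of $K(t)$ would then decide H10 for $K_0[t]$.

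Write $\Ok$ for the valuation ring of $K(t)$ at $t$ and put $D := \{x \in K(t) : \psi(x)\}$. Hypotheses (1) and (2) say exactly that $\Ok \cap K_0(t) \subseteq D \subseteq \Ok$; in particular $D \cap K_0(t) = \Ok \cap K_0(t) = K_0[t]_{(t)}$. The first step is to obtain analogous formulas at the other places of $K_0(t)/K_0$. Since $\psi$ is an existential formula over $K(t)$ --- in particular it may use the parameter $t$ --- I can apply the $K$-automorphisms $t \mapsto t-c$ (for chosen $c \in K_0$) and $t \mapsto 1/t$ to the parameters occurring in $\psi$. Because the truth of a formula is invariant when an automorphism is applied simultaneously to the argument and to the parameters, the resulting existential formulas $\psi_c$ and $\psi_\infty$ define sets sandwiched, just as $D$ is, between the $K_0$-rational and the full valuation ring of $K(t)$ at $t-c$, respectively at infinity. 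This gives, algorithmically and for any finite prescribed list of $c$'s, Diophantine approximations to all of these valuation rings.

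Finally I would transport Denef's Diophantine definition of $K_0[t]$ in $K_0(t)$ up to $K(t)$. Denef expresses ``$x$ has no pole in the affine $t$-line'' by finitely many conditions --- regularity at a bounded explicit list of places together with an auxiliary polynomial relation (in characteristic $0$ one can also bring in the Pell equation $X^2 - (t^2-1)Y^2 = 1$, whose $K_0[t]$-solutions index a copy of $\Z$). Substituting the $\psi_c$'s and $\psi_\infty$ for the valuation-ring predicates in that definition produces an existential formula $\Phi(x)$ over $K(t)$ with $K_0[t] \subseteq \{x : \Phi(x)\}$. The content of the argument is the reverse inclusion, in the weakened form actually needed: any $x \in K(t)$ satisfying $\Phi$ lies in $K_0[t]$, or at worst in a subring over which H10 is still undecidable. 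Here the sandwiching is the delicate point, since a priori $\Phi$ forces $x$ only into some ring between $K_0[t]$ and a large subring of $K(t)$, and one must check that the extra room --- coefficients in $K$ rather than $K_0$, and the behaviour at the unnamed finite places --- does not create spurious solutions of the transferred system. That verification, resting on the robustness of Denef's arithmetic coding, is the step I expect to be the main obstacle; the rest of the reduction is routine.
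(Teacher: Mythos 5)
The paper itself gives no proof of this statement (it is quoted from Pheidas--Zahidi), so I compare your sketch with the standard argument in that reference. Your overall strategy --- use $\psi$ together with its images under the substitutions $t\mapsto t-c$ and $t\mapsto 1/t$ to get existential ``approximations'' of the valuation rings at $K_0$-rational places, then feed these into Denef's reduction --- is indeed the right one. But there are two genuine problems. First, your announced intermediate goal, an existential formula over $K(t)$ \emph{defining} $K_0[t]$, is not attainable from the hypotheses and is not what the actual proof does. The sandwich $\Ok_{t}\cap K_0(t)\subseteq D\subseteq \Ok_{t}$ is one-sided in both directions: in the worst case $D=\Ok_t$, and then the conjunction of finitely many translates $\psi_c\wedge\psi_\infty$ defines a set containing all of $K$ (and much of $K[t]$), so it can never cut down to $K_0[t]$. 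The correct formulation is a \emph{faithful reduction}: one writes down, for each polynomial equation over $\Z$ (resp.\ over $\F_p[t]$ in positive characteristic), an existential sentence over $K(t)$ and proves the equivalence ``original solvable $\iff$ translated sentence true in $K(t)$'', where clause (1) is used to show that the honest $K_0(t)$-rational witnesses (e.g.\ the Pell solutions $(x_n,y_n)$ and the evaluations $y_n(1)=n$) do satisfy the chosen $\psi$'s, and clause (2) is used to exclude spurious witnesses from $K(t)\setminus K_0(t)$. Your fallback phrase ``or at worst in a subring over which H10 is still undecidable'' does not repair this: undecidability of the intermediate ring is neither necessary nor sufficient; what is needed is the two-way equivalence above.

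Second, and decisively, the step you defer --- checking that Denef's coding survives when the exact valuation-ring predicates are replaced by the sandwiched ones, and that elements of $K\setminus K_0$ and poles at unnamed places create no spurious solutions --- \emph{is} the content of the theorem; everything you do carry out (applying automorphisms to parameters, translating systems of equations) is the routine part. In particular you would still need: the determination of all $K(t)$-points of the Pell conic $X^2-(t^2-1)Y^2=1$ (not just the $K[t]$-points), the device for recovering $\Z$ from $y_n(1)=n$ using only a predicate sandwiched between $\Ok_{t-1}\cap K_0(t)$ and $\Ok_{t-1}$, and a separate argument in characteristic $p$ (where the Pell trick is replaced by Pheidas-type relations involving $p$-th powers). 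As it stands the proposal is an accurate description of the proof strategy, but not a proof.
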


Therefore, the aim of this paper is really to give an existential definition
of the predicate ``$\varv_t(x) \geq 0$''
(we do not need to restrict to the prime field as in Theorem~\ref{denef}).
This can easily be reduced to giving an existential definition of ``$\varv_t(x)$ is even'',
this reduction is done implicitly in Theorem~\ref{diophdef}.
We will use quadratic forms to define ``$\varv_t(x)$ is even''.

In section \ref{sec-definitions} of this paper,
we start with some basic definitions and theorems about quadratic forms,
we give a refined Dirichlet density theorem for global fields
and we introduce Newton polygons which immediately give
the valuations of the zeros of a polynomial.

At various places in their proof, Kim and Roush
take an element of $K(t)$ (where $K$ is a p-adic field), apply a variable transformation
which might live over a finite extension of $K$ and then go the residue field
(this is the rational function field over a finite field).
The resulting functions are called ``edge functions''.
They then reason with these functions and lift back to $K(t)$.
Since our proof also needs to work for even residue characteristic
and we want to work with quadratic forms, we cannot do this anymore.
Instead, in section \ref{quadr-recipr},
we define a kind of quadratic reciprocity symbol for polynomials over $K$.
In the case of odd residue charactertic, this symbol can be defined
completely in terms of the residue field.
However, it is actually a lot more natural not to look at the residue field at all.
We also prove a quadratic reciprocity law for this symbol.
Section \ref{isotr-qf} contains our main result regarding quadratic forms:
if $f \in K[t]$ has a particular Newton polygon,
then certain quadratic forms over $K(t)$ involving $f$ and an unknown function $s \in K(t)$
can be made isotropic.
This is then used in section \ref{dioph-undec} to give an existential definition
of ``$\varv_t(x) \geq 0$''.
Also in this last section, we use an elliptic curve to
give an existential definition of the constants $K$ in $K(t)$.

\section{Definitions}\label{sec-definitions}

We start with some definitions and properties of quadratic forms.
We state Milnor's exact sequence, giving a local-global principle
for the Witt ring $W(K(t))$ of a rational function field over a general base field $K$,
and the well known fact that a Pfister form is isotropic if and only if it is hyperbolic.
We assume that the reader is familiar with the basic theory of quadratic forms,
in particular the Witt ring.
We refer to \cite{lam-qf} or \cite{scharlau-qhf}.

\begin{defi}
Let $K$ be a field of characteristic $\neq 2$. For $\al_1, \ldots, \al_n \in K^*$, the quadratic form
\[
	\bigotimes_{i=1}^n \form{1,\al_i} = \form{1,\al_1,\al_2,\ldots,\al_n,\al_1 \al_2,\ldots,\al_1 \al_2 \cdots \al_n}
\]
is called an \emph{$n$-fold Pfister form}.
\end{defi}

\begin{st}(\cite[Ch.~X, Theorem 1.7]{lam-qf})\label{pfister-hyperbolic}
Let $K$ be a field of characteristic $\neq 2$, $\varphi$ a Pfister form over $K$.
If $\varphi$ is isotropic, then $\varphi=0$ in the Witt ring $W(K)$.
\end{st}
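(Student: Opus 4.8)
The plan is to show that $\varphi$ is hyperbolic, which is exactly the statement $\varphi = 0$ in $W(K)$ (the anisotropic part of a form being unique by Witt cancellation). I would argue by induction on the number $n$ of factors, writing $\varphi = \bigotimes_{i=1}^{n}\form{1,\al_i}$ with all $\al_i \in K^*$. For $n=1$ the form $\varphi = \form{1,\al_1}$ is binary and isotropic, hence (in characteristic $\neq 2$) hyperbolic, so there is nothing to prove. The case $n=0$, where $\varphi = \form{1}$ is anisotropic, is vacuous.

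For the inductive step, put $\psi = \bigotimes_{i=1}^{n-1}\form{1,\al_i}$, an $(n-1)$-fold Pfister form, so that $\varphi = \psi \otimes \form{1,\al_n} \cong \psi \orthosum \al_n\psi$. If $\psi$ is itself isotropic, then by the induction hypothesis $\psi = 0$ in $W(K)$; since $W(K)$ is a ring and $\varphi = \psi \cdot \form{1,\al_n}$ there, we conclude $\varphi = 0$ as well. So we may assume $\psi$ is anisotropic. Because $\varphi \cong \psi \orthosum \al_n\psi$ is isotropic, there exist vectors $x,y$, not both zero, in the underlying spaces with $\psi(x) + \al_n\psi(y) = 0$; anisotropy of $\psi$ forces $x \neq 0$ and $y \neq 0$, hence $-\al_n = \psi(x)\,\psi(y)\inv$ lies in $D(\psi)$, the set of nonzero values represented by $\psi$.

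The key input is now the roundness of Pfister forms: for any Pfister form $\psi$ one has $D(\psi) = G(\psi) := \{\, a \in K^* : a\psi \cong \psi \,\}$, which is Pfister's multiplicativity theorem (\cite[Ch.~X]{lam-qf}). Applying this to $-\al_n \in D(\psi)$ gives $-\al_n\psi \cong \psi$, i.e.\ $\al_n\psi \cong -\psi$, so that
\[
	\varphi \cong \psi \orthosum \al_n\psi \cong \psi \orthosum (-\psi),
\]
which is hyperbolic. Hence $\varphi = 0$ in $W(K)$, and the induction is complete.

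The one substantial ingredient here is the roundness statement $D(\psi) = G(\psi)$ (equivalently, the pure-subform machinery for Pfister forms): granting it, the argument above is a short manipulation, but proving it from scratch needs its own induction on the number of Pfister factors. Since this is a standard fact proved before the cited theorem in \cite{lam-qf}, I would simply invoke it rather than reprove it; that is the only place where the argument is not entirely elementary.
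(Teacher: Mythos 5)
Your argument is correct. The paper does not prove this statement at all --- it simply cites \cite[Ch.~X, Theorem 1.7]{lam-qf} --- and what you give is the standard textbook proof: induction on the number of Pfister slots, splitting $\varphi \cong \psi \orthosum \al_n\psi$, and reducing the anisotropic-$\psi$ case to the roundness/multiplicativity property $D(\psi)=G(\psi)$, which is exactly the ingredient Lam establishes before the cited theorem; all the steps (the binary base case, the use of Witt-ring multiplication when $\psi$ is already zero, and the conclusion $\varphi\cong\psi\orthosum(-\psi)$ being hyperbolic) check out.
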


Let $p(t)$ be a monic, irreducible polynomial over $K$.
An arbitrary quadratic form $\varphi$ of dimension $n$ over $K(t)$ can be written as $\varphi_1 \orthosum \form{p(t)} \varphi_2$ with $\varphi_1 = \form{u_1,\ldots,u_r}$ and $\varphi_2 = \form{u_{r+1},\ldots,u_n}$ where the $u_i(t)$ are polynomials coprime with $p(t)$.
Denote the reduction of a polynomial $u(t)$ modulo $p(t)$ with $\overline{u(t)}$.
Then $\overline{\varphi_1} = \form{\overline{u_1},\ldots,\overline{u_r}}$ and $\overline{\varphi_2} = \form{\overline{u_{r+1}},\ldots,\overline{u_n}}$ are called the \emph{first} and \emph{second residue forms of $\varphi$}.
We denote the \emph{second residue class map}
\[W(K(t)) \to W(K[t]/(p)): \varphi \mapsto \overline{\varphi_2}\]
with $\delta_p$.

\begin{st}[Milnor exact sequence]\label{milnorexactsequence}
Let $K$ be a field of characteristic $\neq 2$. 
Let $i$ be the functorial map $W(K)\to W(K(t))$.
Let $\delta = \bigoplus \delta_p$ where the direct sum extends over all monic irreducible polynomials $p(t) \in K[t]$.
Then the following sequence of abelian groups is split exact:
\[0 \to W(K) \stackrel{i}{\to} W(K(t)) \stackrel{\delta}{\to} \bigoplus_p W(K[t]/(p)) \to 0. \]
\end{st}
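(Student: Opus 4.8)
The plan is to verify the two nontrivial exactness conditions together with the splitting, the rest being formal. First I would note that $\delta$ is well defined into the \emph{direct} sum: given a form $\varphi$ over $K(t)$, rescaling each diagonal entry by a square we may write $\varphi\cong\form{f_1,\dots,f_n}$ with all $f_i\in K[t]$, and then every monic irreducible $p\nmid f_1\cdots f_n$ has all the $f_i$ coprime to $p$, so its second residue form is empty and $\delta_p(\varphi)=0$; hence $\delta(\varphi)$ has finite support. The inclusion $\operatorname{im} i\subseteq\ker\delta$ is immediate for the same reason: a form with entries in $K^*$ has all entries coprime to every $p$, so $\delta_p$ kills it. For injectivity of $i$ and for the splitting I would use the first residue homomorphism at the prime $(t)$, namely the functorial map $W(K(t))\to W(K((t)))$ followed by the first residue homomorphism $W(K((t)))\to W(K)$ attached to the uniformiser $t$ (which sends $\form{u}\mapsto\form{\overline u}$ for units $u$ and $\form{tu}\mapsto 0$); call it $\partial^1_{(t)}$. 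Since a nonzero constant is a unit at $(t)$, $\partial^1_{(t)}\circ i=\mathrm{id}_{W(K)}$, so $i$ is a split monomorphism; as a short exact sequence of abelian groups whose first map admits a retraction is split exact, it remains only to prove $\ker\delta\subseteq\operatorname{im} i$ and surjectivity of $\delta$. (Injectivity of $i$ is also elementary on its own: a form over $K$ with a nontrivial zero over $K(t)$ has one with entries in $K[t]$ not all divisible by $t$, and specialising $t=0$ gives a nontrivial zero over $K$.)

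For $\ker\delta\subseteq\operatorname{im} i$, let $\varphi\in\ker\delta$ and diagonalise it, after scaling entries by squares, as $\varphi\cong\form{f_1,\dots,f_n}$ with each $f_i\in K[t]$ squarefree; let $d$ be the largest degree of a monic irreducible dividing some $f_i$, and argue by induction on $d$. When $d=0$ the $f_i$ are constants and $\varphi\in i(W(K))$. When $d\ge 1$, fix a monic irreducible $p$ of degree $d$ dividing some $f_i$ and regroup $\varphi\cong\varphi_1\orthosum\form{p}\varphi_2$ with the entries of $\varphi_1$ and of $\varphi_2$ coprime to $p$, so that the second residue form $\overline{\varphi_2}=\delta_p(\varphi)$ is hyperbolic over $K[t]/(p)$. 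The heart of the matter is then a degree-reduction step: using the hyperbolicity of $\overline{\varphi_2}$ one rewrites $\varphi$, modulo hyperbolic forms (hence without changing its Witt class), as a diagonal form in which $p$ no longer occurs and, more importantly, in which no irreducible factor of degree $\ge d$ occurs at all. Concretely one peels off hyperbolic planes over $K(t)$, at each stage lifting a totally isotropic vector of $\overline{\varphi_2}$ and using the unimodularity of $\varphi_2$ at $(p)$, while keeping the degrees of the polynomials that appear under control. Applying this for every monic irreducible $p$ of degree $d$ produces a Witt-equivalent form with strictly smaller $d$, and the induction closes, giving $\varphi\in\operatorname{im} i$.

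For surjectivity of $\delta$, since it is additive and lands in a direct sum it suffices to realise, for each monic irreducible $p$ and each generator $\form{\overline u}$ of $W(K[t]/(p))$, a class whose $p$-component is $\form{\overline u}$ and whose other components vanish. Lifting $\overline u$ to $u\in K[t]$ of degree $<\deg p$ coprime to $p$ and taking $\varphi_0=\form{pu}$, one gets $\delta_p(\varphi_0)=\form{\overline u}$, while $\delta_q(\varphi_0)\neq 0$ only for the irreducible factors $q\mid u$, all of degree $<\deg p$. Arguing by induction on $\deg p$ (the base case forces $u$ to be a constant, so $\varphi_0$ already works), one subtracts from $\varphi_0$ a class supported precisely on those $q$ to cancel the lower residues without disturbing $\delta_p$, obtaining the desired preimage. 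Combining this with the previous paragraphs yields split exactness.

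The step I expect to be the main obstacle is the degree-reduction inside the proof of $\ker\delta\subseteq\operatorname{im} i$: one must turn the purely qualitative statement ``$\delta_p(\varphi)=0$'' into an \emph{effective} modification of $\varphi$ that provably lowers $d$, which forces careful bookkeeping of the degrees of the auxiliary polynomials (square roots, remainders, Gram-matrix minors) that arise when hyperbolic planes are split off. The same bookkeeping reappears, in a milder form, in the correction step of the surjectivity argument.
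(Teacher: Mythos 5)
The paper does not actually prove this theorem: its ``proof'' is a one-line citation to \cite[Ch.~IX, Theorem 3.1]{lam-qf}, so there is no in-paper argument to match yours against. Your outline is the standard textbook proof of Milnor's theorem, and the peripheral steps are handled correctly: $\delta$ lands in the direct sum, $\operatorname{im} i\subseteq\ker\delta$ is immediate, the first residue homomorphism at $(t)$ (via Springer's theorem for $W(K((t)))$) is a retraction of $i$ and hence gives both injectivity and the splitting once exactness is known, and the surjectivity argument by induction on $\deg p$ using $\form{pu}$ with $\deg u<\deg p$ is complete as stated.

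The genuine gap is exactly where you say it is, and flagging it does not fill it: the inclusion $\ker\delta\subseteq\operatorname{im} i$ is the entire content of the theorem, and your treatment of it is a description of what must happen (``one rewrites $\varphi$ \dots keeping the degrees \dots under control'') rather than an argument. The standard way to make this precise is to introduce the filtration $L_d\subseteq W(K(t))$ by the subgroups generated by $\form{f}$ with $\deg f\le d$, observe that $\delta_p$ vanishes on $L_{d-1}$ for $\deg p=d$, and prove that $\bigoplus_{\deg p=d}\delta_p\colon L_d/L_{d-1}\to\bigoplus_{\deg p=d}W(K[t]/(p))$ is injective (indeed an isomorphism). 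The injectivity rests on concrete Witt-ring identities driven by Euclidean division, above all $\form{f,g}\cong\form{f+g,\,fg(f+g)}$ and the consequence that, for $p$ of degree $d$ and $g$ of degree $<d$, the class of $\form{pg}$ modulo $L_{d-1}$ depends only on the square class of $\bar g$ in $K[t]/(p)$; these identities are what replace your informal ``lifting totally isotropic vectors and using unimodularity,'' and without them the induction on $d$ does not close. As written, your proposal is a correct roadmap with the central lemma left unproved, so it cannot yet be accepted as a proof; since the paper itself defers to Lam, either supply this lemma in full or do the same.
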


\begin{bew}
See \cite[Ch.~IX, Theorem 3.1]{lam-qf}.
\end{bew}

At some point, we will construct a polynomial over a p-adic field $K$ with certain properties. 
To do this, we will start from a polynomial in the reduction $k[t]$, 
where $k$ is the (finite) residue field of $K$, 
which we will find using the following generalization of Dirichlet's density theorem:

\begin{st}\cite[Theorem A.10 with $S_0 = \emptyset$]{bass-milnor-serre}\label{dichtheid}
Let $F$ be a global field, $S_\infty$ a finite non-empty set of primes of $F$,
containing all archimedean primes when $F$ is a number field.
Let
\[A=\{x\in F : \varv_\pr(x) \geq 0 \text{ for all } \pr\notin S_\infty\}.\]
Suppose we are given $a,b\in A$ such that $a A + b A = A$
and for each $\pr\in S_\infty$ an open subgroup $V_\pr\subset F^\ast_\pr$
and an $x_\pr\in F^\ast_\pr$.
Suppose also that
$V_\pr$ has finite index in $F^\ast_\pr$ for at least one $\pr \in S_\infty$.

Then there exist infinitely many primes $\pr_0\notin S_\infty$
such that there is a $c\in A$ satisfying \[c\equiv a\mod b\]
\[c\in x_\pr V_\pr \quad\mbox{for all}\quad \pr\in  S_\infty\]
\[c A=\pr_0.\]
\end{st}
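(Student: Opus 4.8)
The plan is to reduce the statement to Dirichlet's theorem on primes in arithmetic progressions for the generalized ideal class groups of the global field $F$ — equivalently, to the Chebotarev density theorem for abelian extensions. The guiding idea is that asking for $c\in A$ which generates a prime ideal and meets the prescribed congruence and local conditions is the same as prescribing a class in a finite idele class group and then realizing that class by a prime.

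First I would translate the problem into the language of ideles. Note that $aA+bA=A$ forces $a$ to be a unit at every finite prime $\mathfrak{q}$ dividing $b$. Form the open subgroup
\[
  U \;=\; \prod_{\pr\in S_\infty}V_\pr \;\times\; \prod_{\mathfrak{q}\mid b}\bigl(1+b\Ok_\mathfrak{q}\bigr) \;\times\; \prod_{\mathfrak{q}\notin S_\infty,\ \mathfrak{q}\nmid b}\Ok_\mathfrak{q}^{\ast}\ \subset\ \mathbb{A}_F^{\ast},
\]
and let $\xi\in\mathbb{A}_F^{\ast}$ be the idele with $\xi_\mathfrak{q}=a$ for $\mathfrak{q}\mid b$, $\xi_\pr=x_\pr$ for $\pr\in S_\infty$, and $\xi_{\mathfrak{q}}=1$ at every other place. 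Set $\mathfrak{C}=\mathbb{A}_F^{\ast}/(F^{\ast}U)$. I claim $\mathfrak{C}$ is finite: the group of norm-one (respectively degree-zero) idele classes is compact, and the image of $U$ under the norm (respectively degree) map has finite index — automatically so when $F$ is a number field, because of the archimedean places, and by the hypothesis that some $V_\pr$ has finite index in $F_\pr^{\ast}$ when $F$ is a function field. (That hypothesis is genuinely needed: without it there may be only finitely many candidate $c$.)

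Next I would match the two problems. For a prime $\pr_0\notin S_\infty$ with $\pr_0\nmid b$, let $\pi_{\pr_0}\in\mathbb{A}_F^{\ast}$ be the idele equal to a uniformizer at $\pr_0$ and to $1$ elsewhere. Unwinding the definitions — using that each $V_\pr$ is a subgroup and that $a$ is a unit at the primes dividing $b$ — one checks that
\[
  [\pi_{\pr_0}]=[\xi]^{-1}\ \text{in}\ \mathfrak{C}
  \quad\Longleftrightarrow\quad
  F^{\ast}\cap \xi\,\pi_{\pr_0}\,U\neq\emptyset ,
\]
and that any $c$ in the latter intersection satisfies $\varv_{\mathfrak{q}}(c)=0$ for all primes $\mathfrak{q}\notin S_\infty$ with $\mathfrak{q}\neq\pr_0$, $\varv_{\pr_0}(c)=1$, $c\equiv a\bmod b$, and $c\in x_\pr V_\pr$ for every $\pr\in S_\infty$. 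Such a $c$ then lies in $A$ and satisfies $cA=\pr_0$, so it is precisely an element as required.

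It remains to show that the single class $[\xi]^{-1}\in\mathfrak{C}$ equals $[\pi_{\pr_0}]$ for infinitely many primes $\pr_0\notin S_\infty$ (the primes dividing $b$ are then automatically excluded, as are the finitely many primes of $S_\infty$). Since $\mathfrak{C}$ is finite, class field theory identifies $\pr_0\mapsto[\pi_{\pr_0}]$ with the Artin map onto $\operatorname{Gal}(L/F)$ for a finite abelian extension $L/F$ unramified outside $S_\infty$ and the divisors of $b$; the Chebotarev density theorem then says that the classes $[\pi_{\pr_0}]$ equidistribute over $\mathfrak{C}$, so every class — in particular $[\xi]^{-1}$ — is hit by a set of primes of positive density, hence by infinitely many. (One may instead argue directly with the Hecke $L$-functions attached to the characters of $\mathfrak{C}$, the essential input being $L(1,\chi)\neq 0$ for $\chi\neq 1$.) The main obstacle in carrying this out is establishing the finiteness of $\mathfrak{C}$ and pinning down exactly where the finite-index hypothesis enters, together with the idele-to-element bookkeeping; the density input itself is entirely classical.
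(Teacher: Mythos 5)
The paper does not prove this statement at all: it is quoted verbatim from Bass--Milnor--Serre (their Theorem A.10 with $S_0=\emptyset$) and used as a black box, so there is no internal proof to compare yours against. Your idelic argument is correct and is essentially the canonical derivation of such refined density theorems. The bookkeeping is right: $aA+bA=A$ does force $a$ to be a unit at each $\mathfrak{q}\mid b$, membership of some $c\in F^\ast$ in the coset $\xi\,\pi_{\pr_0}U$ does unwind to exactly the three required conditions plus $c\in A$, and you correctly locate the one place where the finite-index hypothesis on some $V_\pr$ is indispensable, namely the finiteness of $\mathbb{A}_F^\ast/(F^\ast U)$ in the function-field case (for number fields the archimedean components make this automatic). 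The remaining input, that every class of the finite group $\mathbb{A}_F^\ast/(F^\ast U)$ is represented by infinitely many primes, is Chebotarev (or a direct Hecke $L$-function argument), which is the same classical engine the cited source relies on. The only caveat worth recording is the implicit assumption $b\neq 0$: for $b=0$ the congruence forces $c=a$ and $cA=A$ is not prime, so the statement degenerates; this is harmless for the paper's application, where $b$ is a nonzero element of $k[\bar u]$.
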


Now we look at valued fields and we define the Newton polygon
of a polynomial over a valued field.
For the theory of valuations, we refer to \cite{engler-prestel}.
Let $K$ be a field with a discrete henselian valuation $\varv$.
Let $\Ok$ denote the valuation ring.
The fact that $K$ is ``henselian'' means that the following holds:

\begin{st}[Hensel's Lemma]\cite[Theorem~4.1.3]{engler-prestel}\label{hensel}
For all $f\in\Ok[t]$ and $a\in\Ok$ such that $\varv(f(a)) > 2\varv(f'(a))$,
there exists a $b\in\Ok$ such that $f(b)=0$ and $\varv(b-a)>\varv(f'(a))$.
\end{st}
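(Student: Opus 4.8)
The plan is to deduce this quantitative form of Hensel's Lemma (the paper takes it from \cite{engler-prestel}) from the more elementary ``simple zero'' formulation of henselianity: a polynomial in $\Ok[t]$ whose reduction modulo the maximal ideal has a simple zero in the residue field $k$ has a zero in $\Ok$ reducing to it. The bridge between the two is a rescaling of the variable, designed so that the hypothesis $\varv(f(a)) > 2\varv(f'(a))$ turns the approximate root $a$ into an honest simple zero modulo the maximal ideal.

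Concretely, I would set $c = \varv(f'(a))$ (so $0 \le c < \infty$), fix a uniformizer $\pi$, and use discreteness of $\varv$ to rewrite the hypothesis as $\varv(f(a)) \ge 2c+1$. Looking for the root in the form $b = a + \pi^{c+1}s$ with $s \in \Ok$, I would introduce
\[
  g(s) \;=\; \pi^{-(2c+1)}\, f\bigl(a + \pi^{c+1}s\bigr)
\]
and check, using the Taylor expansion $f(a+\pi^{c+1}s) = \sum_{i \ge 0} \tfrac{f^{(i)}(a)}{i!}\,\pi^{(c+1)i}s^{i}$ (whose coefficients $f^{(i)}(a)/i!$ lie in $\Ok$ since $f \in \Ok[t]$ and $a \in \Ok$), that $g \in \Ok[s]$: the constant term $\pi^{-(2c+1)}f(a)$ is integral by hypothesis, the linear term is $(\pi^{-c}f'(a))\,s$ with $\pi^{-c}f'(a) \in \Ok^{\ast}$, and every term with $i \ge 2$ has coefficient of valuation $\ge (c+1)i - (2c+1) \ge 1$. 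Hence the reduction of $g$ modulo the maximal ideal is a linear polynomial over $k$ with unit leading coefficient, so it has a simple zero; lifting it by the simple-zero form of Hensel's Lemma gives $s_0 \in \Ok$ with $g(s_0) = 0$, and then $b = a + \pi^{c+1}s_0 \in \Ok$ satisfies $f(b) = \pi^{2c+1}g(s_0) = 0$ and $\varv(b-a) \ge c+1 > c = \varv(f'(a))$ (trivially so when $s_0 = 0$, in which case $b = a$ and $f(a) = 0$).

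The only delicate point is the valuation bookkeeping in the rescaling — in particular invoking discreteness to pass from $\varv(f(a)) > 2c$ to $\varv(f(a)) \ge 2c+1$, which is exactly what forces the rescaled linear coefficient to be a \emph{unit} and hence produces a \emph{simple} zero in $k$ rather than merely some zero. As a sanity check on these estimates, the same arithmetic underlies the direct Newton-iteration proof valid over complete fields: for $a_{n+1} = a_n - f(a_n)/f'(a_n)$ one shows $\varv(f'(a_n)) = c$ for all $n$ and $\varv(f(a_{n+1})) \ge 2\varv(f(a_n)) - 2c$, so $\varv(a_{n+1}-a_n) = \varv(f(a_n)) - c \to \infty$ and the limit is the sought root; over a henselian but possibly incomplete $K$ one uses the reduction above instead.
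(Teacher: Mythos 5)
Your argument is correct, and there is nothing in the paper to compare it with: Theorem~\ref{hensel} is stated there without proof, as the cited characterization of ``henselian'' from \cite[Theorem~4.1.3]{engler-prestel}. Your rescaling $b=a+\pi^{c+1}s$, $g(s)=\pi^{-(2c+1)}f(a+\pi^{c+1}s)$ is the standard way to derive this quantitative form from the simple-root lifting property (for not necessarily monic polynomials, which is indeed among the equivalent conditions in the cited theorem), and the valuation bookkeeping checks out: the constant term of $g$ is integral by hypothesis, the linear coefficient $\pi^{-c}f'(a)$ is a unit, and the terms with $i\geq 2$ have coefficients of valuation at least $(c+1)i-(2c+1)\geq 1$, so $\bar g$ is linear with unit leading coefficient and its root lifts. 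The only hypothesis you genuinely use beyond henselianity is discreteness of the value group (to pass from $\varv(f(a))>2c$ to $\varv(f(a))\geq 2c+1$), which is exactly the setting in which the paper invokes the theorem.
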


\begin{defi}
Let $f(t) = a_0 + a_1 t + \ldots + a_d t^d$ be a polynomial over $K$ with $a_0 a_d \neq 0$.
The \emph{Newton polygon} of $f$ is the lower convex hull of the points $(i,\varv(a_i))$ in $\R^2$.
A \emph{vertex} of the Newton polygon is a point $(i,\varv(a_i))$
where two edges of a different slope meet.
We call $i$ the \emph{degree} of the vertex $(i,\varv(a_i))$.
\end{defi}

The Newton polygon of a polynomial consists of a sequence of edges
with strictly increasing slopes, for which the following holds:

\begin{st}(\cite[II~(6.3), II~(6.4)]{neukirch-az})
Let $K$ be a field with a discrete henselian valuation $\varv$.
Let $f(t) = a_0 + a_1 t + \ldots + a_d t^d$ be a polynomial over $K$ with $a_0 a_d \neq 0$.
Denote the unique extension of $\varv$ on the splitting field of $f$ also by $\varv$.
If $(r,\varv(a_r))$~-- $(s,\varv(a_s))$
is an edge of the Newton polygon of $f$ with slope $m$, then $f$ has exactly $s-r$ roots
$\alpha_1, \ldots, \alpha_{s-r}$ with valuation $\varv(\alpha_1) = \dots = \varv(\alpha_{s-r}) = -m$.
If the slopes of the Newton polygon of $f$ are $m_1 < \ldots < m_k$ then
\begin{equation}\label{fact-sl}
	f(t) = a_d \prod_{j=1}^k f_j(t)
,\end{equation}
with $f_j(t) = \prod_{\varv(\alpha_i)=-m_j}(t-\alpha_i)\in K[t]$.
\end{st}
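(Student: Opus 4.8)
The plan is to pass to a splitting field and reduce the whole statement to a single valuation estimate on the elementary symmetric functions of the roots of $f$. Let $L$ be a splitting field of $f$ over $K$; by hypothesis $\varv$ extends uniquely to $L$, and we write this extension again as $\varv$. Factor $f(t)=a_d\prod_{i=1}^{d}(t-\alpha_i)$ with $\alpha_i\in L$; comparing coefficients gives $a_{d-j}=(-1)^{j}a_d\,e_j(\alpha_1,\dots,\alpha_d)$ for $0\le j\le d$, where $e_j$ denotes the $j$-th elementary symmetric polynomial and $e_0=1$. Reorder the roots so that $\varv(\alpha_1)\le\varv(\alpha_2)\le\dots\le\varv(\alpha_d)$, and let $N$ be the piecewise-linear function on $[0,d]$ with $N(0)=0$ and slope $\varv(\alpha_j)$ on $[j-1,j]$, so that $N(j)=\varv(\alpha_1)+\dots+\varv(\alpha_j)$; it is convex, and its successive slopes are exactly the root valuations taken with multiplicity.

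The heart of the matter is that (i) $\varv(e_j(\alpha))\ge N(j)$ for every $j$, and (ii) equality holds when $j=0$, when $j=d$, and whenever $\varv(\alpha_j)<\varv(\alpha_{j+1})$. Assertion (i) is immediate from the ultrametric inequality, since $e_j(\alpha)$ is a sum of products of $j$ distinct roots, each such product having valuation at least $\varv(\alpha_1)+\dots+\varv(\alpha_j)=N(j)$. For (ii), if $\varv(\alpha_j)<\varv(\alpha_{j+1})$ then every $j$-element subset of $\{1,\dots,d\}$ other than $\{1,\dots,j\}$ uses an index exceeding $j$ and hence yields a product of strictly larger valuation than $\alpha_1\cdots\alpha_j$; so $e_j(\alpha)$ has a unique term of minimal valuation, no cancellation occurs, and $\varv(e_j(\alpha))=N(j)$. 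By (i) the convex function $N$ lies on or below all the points $(j,\varv(e_j(\alpha)))$, $0\le j\le d$, hence on or below their lower convex hull, while by (ii) the hull meets $N$ at $j=0$, at $j=d$, and at every vertex of $N$. A convex function lying above $N$ and agreeing with $N$ at the endpoints of each affine piece of $N$ must equal $N$; so the lower convex hull of the points $(j,\varv(e_j(\alpha)))$ is precisely the graph of $N$.

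To translate this back to $f$, note that $\varv(a_{d-j})=\varv(a_d)+\varv(e_j(\alpha))$, so under the substitution $i=d-j$ followed by a vertical shift by $\varv(a_d)$ the Newton polygon of $f$ is the image of the graph of $N$ under the reflection $x\mapsto d-x$. Hence its edges have slopes $-\varv(\alpha_d)\le\dots\le-\varv(\alpha_1)$ (increasing from left to right, as the slopes of a lower convex hull must), its vertices are the images of the vertices of $N$, and an edge running from $(r,\varv(a_r))$ to $(s,\varv(a_s))$ of slope $m$ is the image of a maximal segment of $N$ of slope $-m$, hence corresponds to precisely the $s-r$ roots $\alpha_i$ with $\varv(\alpha_i)=-m$. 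This gives the first assertion.

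For \eqref{fact-sl}, set $f_j(t)=\prod_{\varv(\alpha_i)=-m_j}(t-\alpha_i)$. Because $\varv$ extends uniquely to $L$, any two $K$-conjugate roots of $f$ have the same valuation; hence every monic irreducible factor of $f$ in $K[t]$ has all of its roots of a single valuation, and $f_j$ is simply the product, with multiplicity, of those irreducible factors whose roots have valuation $-m_j$. In particular $f_j\in K[t]$, and $a_d\prod_{j=1}^{k}f_j(t)=a_d\prod_{i=1}^{d}(t-\alpha_i)=f(t)$, as claimed. The one delicate point is (ii): one must check that the minimal-valuation term of $e_j(\alpha)$ is genuinely unique, so that no cancellation destroys the estimate --- this is exactly where the ordering of the roots and the strict inequality $\varv(\alpha_j)<\varv(\alpha_{j+1})$ are used --- and everything else is bookkeeping with convex hulls together with the uniqueness of the henselian extension. (Alternatively, one can first prove that the Newton polygon of a product is the concatenation, sorted by increasing slope, of the Newton polygons of the factors, and apply this to $f=a_d\prod_j f_j$, each $f_j$ having a single-slope Newton polygon by the special case of (i)--(ii) in which all root valuations are equal.)
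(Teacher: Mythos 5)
The paper does not prove this statement but cites it to Neukirch (II.6.3, II.6.4), and your argument is essentially the standard proof given there: write $f=a_d\prod(t-\alpha_i)$, bound $\varv(e_j(\alpha))$ from below by the sum of the $j$ smallest root valuations, and observe that the minimum is attained by a unique term exactly at the slope breaks, so the Newton polygon coincides with the (reflected) convex function built from the sorted root valuations. Your proof is correct, including the key no-cancellation step and the use of uniqueness of the henselian extension to see that each $f_j$ is Galois-stable and hence lies in $K[t]$.
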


\begin{defi}
We define \emph{the factorization of $f$ according to the slopes}
to be the expression $a_d \prod_{j=1}^k f_j(t)$ in \eqref{fact-sl}.
Note that the polynomials $f_j(t)$ are not necessarily irreducible over $K$.
The Newton polygon of each $f_j(t)$ has exactly one edge of slope $m_j$.
\end{defi}

\begin{lem}\label{same-sq-cl}
Let $a, b \in K^*$. 
If $\varv(b) > \varv(a) + \varv(4)$, then $a + b = x^2 a$ for some $x \in K^*$.
\end{lem}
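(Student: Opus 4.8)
The plan is to show that $a+b$ and $a$ differ by a square by factoring out $a$ and applying Hensel's Lemma (Theorem~\ref{hensel}) to the polynomial $g(x) = x^2 - (1 + b/a)$. First I would set $c = b/a \in K^*$; by hypothesis $\varv(c) = \varv(b) - \varv(a) > \varv(4) = 2\varv(2)$. It then suffices to find $x \in K^*$ with $x^2 = 1 + c$, since multiplying by $a$ gives $x^2 a = a + b$. Consider $g(x) = x^2 - (1+c) \in \Ok[x]$ (note $1 + c \in \Ok$ because $\varv(c) > 2\varv(2) \geq 0$), and take the approximate root $a_0 = 1$. Then $g(1) = -c$, so $\varv(g(1)) = \varv(c)$, while $g'(x) = 2x$, so $\varv(g'(1)) = \varv(2)$. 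The Hensel condition $\varv(g(1)) > 2\varv(g'(1))$ reads $\varv(c) > 2\varv(2) = \varv(4)$, which is exactly our hypothesis. Hensel's Lemma then yields $b \in \Ok$ with $g(b) = 0$, i.e. $b^2 = 1 + c$, and moreover $\varv(b - 1) > \varv(2) \geq 0$, so $\varv(b) = 0$ and in particular $b \in K^*$. Setting $x = b$ and multiplying through by $a$ finishes the proof.

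I should double-check the one edge case where the base field has residue characteristic $2$ versus odd: when $p$ is odd, $\varv(2) = 0$ so the hypothesis is merely $\varv(b) > \varv(a)$, and the argument above still runs verbatim. The only genuinely delicate point is making sure $1 + c$ actually lies in $\Ok$ so that $g \in \Ok[x]$ as Theorem~\ref{hensel} requires; this is immediate since $\varv(1+c) \geq \min(\varv(1), \varv(c)) = \min(0, \varv(c))$ and $\varv(c) > \varv(4) \geq 0$, so $\varv(1+c) = 0$. Thus there is no real obstacle here — the lemma is essentially a clean packaging of Hensel's Lemma, and the main thing to be careful about is tracking the factor of $\varv(4)$ through the valuation of $g'$, which is where the "$+\varv(4)$" in the statement comes from.
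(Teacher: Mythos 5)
Your proof is correct and follows essentially the same route as the paper: divide by $a$, observe $1+a^{-1}b\in\Ok$, and apply Hensel's Lemma to $x^2-(1+a^{-1}b)$ at the approximate root $1$, where the condition $\varv(b)-\varv(a)>\varv(4)=2\varv(2)$ is exactly the hypothesis. The only (cosmetic) issue is that you reuse the letter $b$ both for the element in the statement and for the root produced by Hensel's Lemma; rename one of them.
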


\begin{bew}
By assumption, $\varv(a\inv b) > 0$, so $1 + a\inv b \in \Ok$.
Let $f(t) = t^2 - (1 + a\inv b)$.
Then $\varv(f(1)) = \varv(b) - \varv(a) > \varv(4) = 2 \varv(f'(1))$.
From Hensel's Lemma follows that $1 + a\inv b$ is a square.
\end{bew}

\begin{lem}\label{termvaluatie}
Let $f(t) = \sum_{k=0}^d b_k t^k$ be a monic polynomial (i.e.~$b_d = 1$)
whose Newton polygon has only one edge.
Let $m := -\varv(b_0)/d$ be the slope.
Let $\alpha \in K$.
Then
\[
	\varv\big(b_k \alpha^k\big)
		\geq k\big(m + \varv(\alpha)\big) - d m
		= (k - d)\big(m + \varv(\alpha)\big) + d \varv(\al)
.\]
\end{lem}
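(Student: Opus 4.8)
The plan is to read the claimed bound straight off the geometry of the Newton polygon, which by hypothesis consists of a single edge. First I would record the purely algebraic identity appearing in the statement, namely
\[
	k\big(m + \varv(\alpha)\big) - d m = (k-d)\big(m+\varv(\alpha)\big) + d\varv(\alpha),
\]
which is immediate, so that it suffices to bound $\varv(b_k\alpha^k)$ below by the first of these two expressions.

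Next I would use that $f$ is monic, so $\varv(b_d) = 0$, and that its Newton polygon is a single segment: that segment therefore runs from $(0,\varv(b_0))$ to $(d,0)$, and its slope is $\big(0-\varv(b_0)\big)/d = -\varv(b_0)/d = m$, matching the definition of $m$ (in particular a one-edge Newton polygon forces $b_0\neq 0$, so $m$ is well defined). Since the Newton polygon is the lower convex hull of the points $(k,\varv(b_k))$ for $0\le k\le d$, each such point lies on or above this segment. The line through the segment has equation $y = \varv(b_0) + mk = mk - dm$, using $\varv(b_0) = -dm$, and hence
\[
	\varv(b_k) \geq mk - dm \qquad (0 \leq k \leq d),
\]
where the inequality is read as $\infty \geq mk-dm$ in the degenerate case $b_k = 0$.

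Finally I would add $k\varv(\alpha)$ to both sides of this last inequality to obtain
\[
	\varv(b_k\alpha^k) = \varv(b_k) + k\varv(\alpha) \geq mk - dm + k\varv(\alpha) = k\big(m+\varv(\alpha)\big) - dm,
\]
and then invoke the identity from the first step to rewrite the right-hand side in the second form given in the statement. There is essentially no obstacle here; the only points requiring a word of care are the degenerate cases (a vanishing coefficient $b_k$, or $\alpha = 0$ together with $k=0$), which are either vacuous or handled by the usual convention $\varv(0) = \infty$, and the observation just noted that a single-edge Newton polygon guarantees $b_0\neq 0$.
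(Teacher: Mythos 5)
Your proof is correct and is exactly the intended argument: the paper's proof simply states that the bound "follows immediately from the fact that the Newton polygon of $f$ has only one edge," and your write-up is the natural elaboration of that one-line justification (every point $(k,\varv(b_k))$ lies on or above the single segment of slope $m$ through $(d,0)$, then add $k\varv(\alpha)$). No issues.
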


\begin{bew}
This follows immediately from the fact that the Newton polygon
of $f$ has only one edge.
\end{bew}

\begin{prop}\label{wortelinvullen}
Let $K$ be a field with a discrete henselian valuation $\varv$ and let $\al \in K$.
Let $f$ be a polynomial over $K$ of even degree with $f(0) \neq 0$.
Assume that the Newton polygon of $f$ has only one edge, let $m$ be the slope.
Assume that $m \neq -\varv(\alpha)$ and let $N \in \N$ be such that
\begin{equation}\label{ongelijkheidN}
	N > \frac{\varv(4)}{|m + \varv(\alpha)|}
.\end{equation}
Assume that $f$ is of the form
\[f = a(t) + g(t) t^N + z(t) t^{2 N + \deg g - \deg z},\]
where $a$, $g$ and $z$ are polynomials over $K$.
Assume that $\deg(g)$ and $\deg(z)$ are even and that $\deg(a) < N$ and $\deg(z) < N$.

If $m < -\varv(\al)$, then $f(\al) = z(\al)$ in $\sqc{K}$.
If $m > -\varv(\al)$, then $f(\al) = a(\al)$ in $\sqc{K}$.
\end{prop}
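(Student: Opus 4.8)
The plan is to substitute $t=\al$ and read off the valuations of the three summands
\[
a(\al),\qquad g(\al)\,\al^{N},\qquad z(\al)\,\al^{2N+\deg g-\deg z}
\]
of $f(\al)$, using the single-edge hypothesis on the Newton polygon of $f$ to show that, in each of the two cases, exactly one of these has valuation so far below that of the other two that Lemma~\ref{same-sq-cl} identifies $f(\al)$ with it in $\sqc{K}$.

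First I would pin down the shape of $f$. Since $\deg a<N$ and $\deg z<N$, the exponents occurring in $a(t)$, in $g(t)t^{N}$ and in $z(t)t^{2N+\deg g-\deg z}$ lie in the pairwise disjoint intervals $[0,N)$, $[N,N+\deg g]$ and $(N+\deg g,\,2N+\deg g]$; hence $d:=\deg f=2N+\deg g$ (even), the constant term of $f$ is $f(0)=a(0)\neq0$, and the leading coefficient of $f$ is that of $z$. Writing $f=\sum_i b_i t^i$, the fact that the Newton polygon is the single segment from $(0,\varv(b_0))$ to $(d,\varv(b_d))$ of slope $m$ gives $\varv(b_i)\ge\varv(b_0)+im$ for all $i$, with equality at $i=0$ and $i=d$ (equivalently, apply Lemma~\ref{termvaluatie} to the monic polynomial $f/b_d$). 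With $\eps:=m+\varv(\al)$, which is nonzero by hypothesis, this yields the basic estimate $\varv(b_i\al^i)\ge\varv(b_0)+i\eps$ for all $i$, with equality for $i\in\{0,d\}$. The case $\al=0$ is trivial (then $m>-\varv(\al)$ and $f(\al)=a(0)=a(\al)$), so assume $\al\neq0$.

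Now split on the sign of $\eps$. If $\eps>0$ (the case $m>-\varv(\al)$): in $a(\al)=\sum_{i\le\deg a}b_i\al^i$ the $i=0$ term has strictly smallest valuation, so $\varv(a(\al))=\varv(b_0)$; every exponent contributing to $g(\al)\al^N$ or to $z(\al)\al^{2N+\deg g-\deg z}$ is $\ge N$, so both of those summands have valuation $\ge\varv(b_0)+N\eps>\varv(a(\al))+\varv(4)$, the last inequality being \eqref{ongelijkheidN}. By Lemma~\ref{same-sq-cl}, $f(\al)=a(\al)$ in $\sqc{K}$. If $\eps<0$ (the case $m<-\varv(\al)$): symmetrically, in $z(\al)\al^{2N+\deg g-\deg z}=\sum_{d-\deg z\le i\le d}b_i\al^i$ the $i=d$ term has strictly smallest valuation, so this summand has valuation $\varv(b_0)+d\eps$; meanwhile $\varv(a(\al))\ge\varv(b_0)+(\deg a)\eps\ge\varv(b_0)+(N-1)\eps$ and $\varv(g(\al)\al^N)\ge\varv(b_0)+(N+\deg g)\eps$. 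Using $d=2N+\deg g$, the valuation gap from $\varv(b_0)+d\eps$ up to $\varv(a(\al))$ is at least $(N+\deg g+1)|\eps|\ge(N+1)|\eps|$, and the gap up to $\varv(g(\al)\al^N)$ is at least $N|\eps|$; both exceed $\varv(4)$ by \eqref{ongelijkheidN}, so by Lemma~\ref{same-sq-cl}, $f(\al)=z(\al)\al^{2N+\deg g-\deg z}$ in $\sqc{K}$. Finally $2N$, $\deg g$ and $\deg z$ are even, so $\al^{2N+\deg g-\deg z}$ is a square and $f(\al)=z(\al)$ in $\sqc{K}$. (In both cases the dominant summand is nonzero of finite valuation, so $f(\al)$ and the asserted representative indeed lie in $K^*$.)

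There is no conceptual obstacle here; the argument is pure bookkeeping. The one point needing care is the case $\eps<0$, where one must check that the two non-dominant summands beat the dominant one not merely in valuation but by strictly more than $\varv(4)$ — and this is exactly what the exponents $N$ and $2N+\deg g-\deg z$, together with the bound \eqref{ongelijkheidN}, are arranged to guarantee, the inequality \eqref{ongelijkheidN} being used with no room to spare for the middle summand $g(\al)\al^N$. The other thing to keep straight is that the three exponent ranges are genuinely disjoint, which is where $\deg a<N$ and $\deg z<N$ enter, along with the identity $\deg f=2N+\deg g$.
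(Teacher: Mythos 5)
Your proof is correct and follows essentially the same route as the paper: identify the summand ($a(\al)$ or $z(\al)\al^{2N+\deg g-\deg z}$) whose extreme monomial has strictly lowest valuation, use the single-edge estimate $\varv(b_i\al^i)\ge\varv(b_0)+i(m+\varv(\al))$ (the content of Lemma~\ref{termvaluatie}) together with \eqref{ongelijkheidN} to show the other two summands exceed it by more than $\varv(4)$, and conclude via Hensel's Lemma (packaged as Lemma~\ref{same-sq-cl}), absorbing the even power of $\al$ at the end. The bookkeeping matches the paper's, with a couple of extra (harmless) remarks on the $\al=0$ case and the nonvanishing of the dominant summand.
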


Remark that we can always take $N = 1$ in \eqref{ongelijkheidN}
if the residue characteristic is different from $2$.

\begin{bew}
Let $d = 2N + \deg(g)$ be the degree of $f$.
Suppose first that the slope $m$ of $f$ is strictly smaller than $-\varv(\al)$.
Let $c := z(\al) \al^{2 N + \deg g - \deg z}$,
whose leading term is the monomial of strictly lowest valuation in $f(\al)$.
We then have that
\[
	c\inv f(\al) = c\inv a(\al) + c\inv g(\al) \al^N + 1
.\]
Using Lemma~\ref{termvaluatie}, the inequality \eqref{ongelijkheidN} implies
\[
	\varv(a(\al) + g(\al) \al^N)
		\geq (-N)(m + \varv(\al)) + d \varv(\al)
		= N |m + \varv(\al)| + \varv(c)
		> \varv(4) + \varv(c)
.\]
Therefore,
\[
	\varv(c\inv f(\al) - 1) > \varv(4)
.\]
By Hensel's Lemma applied to the polynomial $x^2 - (c\inv f(\al))$,
we find that $c\inv f(\al)$ is a square.
Since $\al^{2 N + \deg g - \deg z}$ is a square,
it follows that $z(\al)$ is in the same square class as $f(\al)$.

If the slope $m$ of $f$ is strictly bigger than $-\varv(\al)$, we let $c := a(\al)$.
The constant term of $c$ is the monomial of strictly lowest valuation in $f(\al)$.
Note that $\varv(c) = -m d$.
We then have that
\[
	c\inv f(\al) = 1 + c\inv g(\al) \al^N + c\inv z(\al) \al^{d - \deg z}
.\]
Using Lemma~\ref{termvaluatie}, the inequality \eqref{ongelijkheidN} implies
\[
	\varv(g(\al) \al^N + z(\al) \al^{d - \deg z})
		\geq N (m + \varv(\al)) - d m
		= N |m + \varv(\al)| + \varv(c)
		> \varv(4) + \varv(c)
.\]
Therefore,
\[
	\varv(c\inv f(\al) - 1) > \varv(4)
.\]
As before, we find that $c\inv f(\al)$ is a square;
hence $f(\al)$ is in the same square class as $c = a(\al)$.
\end{bew}

\section{A quadratic reciprocity symbol for polynomials}\label{quadr-recipr}

From now on, let $K$ be a p-adic field, that is a finite extension of some $\Q_p$.
Fix a uniformizer $\pi$ of $K$, i.e.~a generator of the maximal ideal in the valuation ring $\Ok$.
We normalize the valuation on $K$ and all its finite extensions
such that $\varv(\pi) = 1$.
This means for example that $\varv(\sqrt\pi) = 1/2$ in $K(\sqrt\pi)$.
Remark that $\varv$ extends uniquely to the algebraic closure of $K$.

The structure of the Witt ring of p-adic fields is well-known,
in particular we know that $I^2(K) \cong \Z/2\Z$ and that $I^3(K) = 0$ (see \cite[Ch.~VI, Corollary 2.15]{lam-qf}).
In this section, we will define a kind of Legendre symbol for the function field $K(t)$.
This symbol can be seen as the second residue map of a certain $3$-fold Pfister form over $K(t)$,
and takes two values according to the isotropy of this quadratic form.
We will prove multiplicativity and a quadratic reciprocity law for this symbol.

\begin{defi}
Let $q(t)$ be a monic irreducible polynomial over $K$ and
let $\alpha$ be a root of $q$ in the algebraic closure.
For $p(t)\in K[t]$, coprime to $q(t)$, we define the Legendre symbol
\begin{align*}
	\Leg{p}{q} &= \delta_q(\form{1,\pi}\form{1, -p(t)}\form{1, -q(t)}) \\
		&= \form{1,\pi}\form{1, -p(\alpha)} \in I^2(K(\alpha)) \cong \Z/2\Z
.\end{align*}
The second equality is justified because 
$\form{-1} \varphi = \varphi$ for $\varphi \in I^2(L)$.
We denote this symbol multiplicatively with values in $\{-1,1\}$,
despite the fact that the operation corresponds to addition of quadratic forms in the Witt ring.
This symbol is well defined for $p(t) \in (K[t]/q(t))^*$.
\end{defi}

% This symbol depends on the choice of uniformizer $\pi$,
% but this dependency on $\pi$ is different for residue characteristic odd and even.
% For $p \neq 2$, the quadratic form $\form{1,\pi} \form{1,-u}$ is anisotropic for any unit $u$ that is not a square in $K^*$. 
% So if $p(\al)$ is a unit, then $\form{1,\pi} \form{1, -p(\al)}$ is independent of the uniformizer $\pi$,
% but if $p(\al)$ has odd valuation, then for two different uniformizing elements $\pi$ and $\pi'$,
% $\form{1,\pi} \form{1, -p(\al)}$ and $\form{1,\pi'} \form{1, -p(\al)}$ can be different elements of $I^2(K(\alpha))$.
% For $p = 2$, it is no longer true that $\form{1,\pi} \form{1,-u}$ is anisotropic
% for any unit $u$ that is not a square, so regardless what the valuation of $p(\al)$ is,
% $\form{1,\pi} \form{1, -p(\al)}$ and $\form{1,\pi'} \form{1, -p(\al)}$ can be different.
% To be consistent, we will work all the time with one fixed uniformizer.

For every $n \in \Z$, we have $\Leg{p}{q} = \Leg{\pi^n p}{q}$ because of the factor $\form{1,\pi}$
in the definition of the Legendre symbol.
For odd residue characteristic,
this implies the following equivalent definition of the symbol:
\[
	\Leg{p}{q} = 1
	\iff
	\pi^{-\varv(p(\al))} p(\al) \text{ is a square in } K(\al)^*
,\]
where $\al$ is a root of $q(t)$.
For even residue characteristic, we cannot give such an easy equivalence.
Note that the symbol clearly depends on the choice of uniformizer $\pi$.
To be consistent, we will work all the time with one fixed uniformizer.

Next, we prove multiplicativity of the symbol.
\begin{prop}
Let $q(t)$ be a monic irreducible polynomial over $K$.
Let $p(t)$ and $r(t)$ be polynomials over $K$, coprime to $q(t)$.
Then
\begin{equation}\label{multiplicativity}
	\Leg{p r}{q} = \Leg{p}{q} \Leg{r}{q}
.\end{equation}
\end{prop}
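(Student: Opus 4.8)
The plan is to reduce the multiplicativity of the symbol to the bilinearity of the second residue map $\delta_q$ together with the fact that Pfister forms behave well under the relevant operations in the Witt ring. Recall that by definition
\[
	\Leg{p}{q} = \delta_q\bigl(\form{1,\pi}\form{1,-p(t)}\form{1,-q(t)}\bigr),
\]
so the statement to prove is that the map $p \mapsto \delta_q(\form{1,\pi}\form{1,-p(t)}\form{1,-q(t)})$ sends the product $pr$ to the product (in $\{-1,1\}$, i.e.\ the sum in $I^2(K(\alpha))/I^3 \cong \Z/2\Z$) of its values at $p$ and at $r$. Since $\delta_q$ is a homomorphism of abelian groups by Theorem~\ref{milnorexactsequence}, it suffices to establish the corresponding identity \emph{before} applying $\delta_q$, namely that in $W(K(t))$ one has
\[
	\form{1,\pi}\form{1,-p r}\form{1,-q} \equiv \form{1,\pi}\form{1,-p}\form{1,-q} + \form{1,\pi}\form{1,-r}\form{1,-q}
\]
modulo the kernel of $\delta_q$ — or more conveniently, I will work with the second equality in the definition and argue directly in $I^2(K(\alpha)) \cong \Z/2\Z$, where $\alpha$ is a root of $q$.

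So the core computation is: in $I^2(K(\alpha))$,
\[
	\form{1,\pi}\form{1,-p(\alpha)r(\alpha)} = \form{1,\pi}\form{1,-p(\alpha)} + \form{1,\pi}\form{1,-r(\alpha)}.
\]
This follows from the standard identity for Pfister forms / the bilinearity of the symbol $(\,\cdot\,,\,\cdot\,)$ on $K(\alpha)^*/K(\alpha)^{*2}$: in general $\form{1,a}\form{1,bc} \equiv \form{1,a}\form{1,b} + \form{1,a}\form{1,c} \pmod{I^3}$, which is exactly the statement that the quaternion symbol $(a,b)$ is multiplicative in its second argument. Since $I^3(K(\alpha)) = 0$ for the p-adic field $K(\alpha)$ (quoted in Section~\ref{quadr-recipr}), this congruence is an honest equality in $W(K(\alpha))$, and both sides lie in $I^2(K(\alpha)) \cong \Z/2\Z$. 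Translating the additive identity in $\Z/2\Z$ into the multiplicative $\{-1,1\}$ notation yields precisely \eqref{multiplicativity}.

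The only point requiring a little care — and what I expect to be the main (minor) obstacle — is the bookkeeping between the two expressions in the definition of $\Leg{p}{q}$ and the well-definedness modulo squares: one must check that $p(\alpha)$, $r(\alpha)$ and $p(\alpha)r(\alpha)$ are all nonzero in $K(\alpha)$ (guaranteed since $p$, $r$ are coprime to $q$), and that replacing a representative by a square does not change the symbol (which is the content of the remark that the symbol is well defined on $(K[t]/q(t))^*$, again because $\form{1,a}\form{1,b^2} = \form{1,a}\form{1,1}$ is hyperbolic). Once these routine verifications are in place, the identity \eqref{multiplicativity} is immediate from the bilinearity of the Pfister symbol modulo $I^3$. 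I would therefore write the proof as: (i) pass to the root $\alpha$ and note $p(\alpha)r(\alpha) \neq 0$; (ii) invoke the generic Pfister identity $\form{1,\pi}\form{1,-p(\alpha)r(\alpha)} = \form{1,\pi}\form{1,-p(\alpha)} + \form{1,\pi}\form{1,-r(\alpha)}$ in $I^2(K(\alpha))$, valid because $I^3(K(\alpha)) = 0$; (iii) rewrite additively in $\Z/2\Z$ as multiplicatively in $\{-1,1\}$.
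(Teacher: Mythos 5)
Your proposal is correct and follows essentially the same route as the paper: pass to a root $\alpha$ of $q$, observe that the difference between the two sides of \eqref{multiplicativity} in $W(K(\alpha))$ is the $3$-fold Pfister form $\form{1,\pi}\form{1,-p(\alpha)}\form{1,-r(\alpha)}$, and conclude since $I^3$ of a p-adic field vanishes. The only difference is that the paper writes out the short Witt-ring manipulation establishing the bilinearity identity you cite as standard.
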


\begin{bew}
Let $\al$ be a root of $q$.
The statement \eqref{multiplicativity} is equivalent to
\[
	\form{1,\pi} \form{1,-p(\al)} \orthosum \form{1,\pi} \form{1,-r(\al)} \orthosum -\form{1,\pi} \form{1,-p(\al) r(\al)} = 0
.\]
We can simplify this to
\begin{align*}
	\form{1,\pi} \form{1,1,-1,-p(\al),-r(\al),p(\al)r(\al)} &= 0 \\
	\form{1,\pi} \form{1,-1} \orthosum \form{1,\pi} \form{1,-p(\al),-r(\al),p(\al)r(\al)} &= 0 \\
	\form{1,\pi} \form{1,-p(\al)} \form{1,-r(\al)} &= 0
.\end{align*}
Since 3-fold Pfister forms are hyperbolic over $K$,
the last equality is always true.
\end{bew}

In order to further study this quadratic reciprocity symbol, we need to use transfers.
See \cite[Ch.~2, \S\,5]{scharlau-qhf} for the definition and properties of the transfer map.
\begin{prop}\label{transfer}
Let $K$ be a p-adic field and $L$ a finite extension of $K$. 
Let $s: L \to K$ be a non-zero $K$-linear map,
$s_*: W(L) \to W(K)$ the corresponding transfer map.
Then $s_*$ induces an isomorphism $I^2(L) ~\tilde{\to}~ I^2(K)$.
\end{prop}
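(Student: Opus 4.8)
The plan is to use the known structure theory of Witt rings of $p$-adic fields together with general properties of the transfer map. Recall that for any $p$-adic field $E$, the fundamental ideal powers satisfy $I^2(E) \cong \Z/2\Z$ and $I^3(E) = 0$; in particular $I^2(L) \cong \Z/2\Z \cong I^2(K)$ as abstract groups, so the content of the proposition is that the transfer map realizes this isomorphism. First I would check that $s_*$ maps $I^2(L)$ into $I^2(K)$. This follows from Frobenius reciprocity: $s_*$ is a $W(K)$-module homomorphism for the module structure on $W(L)$ given by the restriction map $i: W(K) \to W(L)$, and $s_*$ commutes with dimension index modulo $2$ (it sends even-dimensional forms to even-dimensional forms since $\dim s_*(\varphi) = [L:K]\dim\varphi$... — more carefully, one uses that $s_*$ takes $I(L)$ to $I(K)$, which is standard, e.g.\ \cite[Ch.~2]{scharlau-qhf}). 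Then $s_*(I^2(L)) = s_*(I(L)\cdot I(L)) \subseteq I(K)\cdot s_*(I(L)) \subseteq I^2(K)$ by Frobenius reciprocity, so we indeed get an induced homomorphism $\bar s_*: I^2(L) \to I^2(K)$ between two groups of order $2$.

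It remains to show $\bar s_*$ is nonzero (equivalently injective, equivalently surjective, since both sides have order $2$). Here I would argue by exploiting a generator: a generator of $I^2(L) \cong \Z/2\Z$ is the unique anisotropic $4$-dimensional form over $L$, which for a $p$-adic field is $\langle 1,1,1,1\rangle$ twisted appropriately — concretely, any anisotropic $2$-fold Pfister form, e.g.\ the norm form of the quaternion division algebra over $L$. Call it $\rho$. I want to show $s_*(\rho) \neq 0$ in $I^2(K)$. Equivalently, by Theorem~\ref{pfister-hyperbolic}-type reasoning, I want $s_*(\rho)$ to be anisotropic, or at least to have nonzero class; since $I^3(K) = 0$ and $I^2(K)$ has order $2$, a class in $I^2(K)$ is nonzero iff it is not represented by a hyperbolic form, and it suffices to compute a Witt invariant. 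The cleanest route: use the Hasse–Witt (Clifford) invariant, which gives an isomorphism $I^2(E)/I^3(E) \to \mathrm{Br}(E)[2] \cong \Z/2\Z$ for $p$-adic $E$, and is compatible with transfer via the corestriction map on Brauer groups, $\mathrm{cor}_{L/K}: \mathrm{Br}(L) \to \mathrm{Br}(K)$. So $\bar s_*$ corresponds (up to the identifications) to corestriction $\mathrm{Br}(L)[2] \to \mathrm{Br}(K)[2]$, which for local fields is an isomorphism $\Z/2\Z \to \Z/2\Z$ because under the invariant maps $\mathrm{inv}_L: \mathrm{Br}(L) \xrightarrow{\sim} \Q/\Z$ and $\mathrm{inv}_K: \mathrm{Br}(K) \xrightarrow{\sim} \Q/\Z$ one has $\mathrm{inv}_K \circ \mathrm{cor}_{L/K} = \mathrm{inv}_L$. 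Hence $\bar s_*$ is an isomorphism.

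The main obstacle, and the step requiring the most care, is the compatibility of the transfer on Witt rings with corestriction on Brauer groups via the Clifford invariant, and pinning down that the transfer we are using (for an arbitrary nonzero $K$-linear $s$, not just the trace form) induces the same map on $I^2/I^3$ regardless of the choice of $s$. The choice-independence is what makes the statement clean: two transfers $s_*$ and $s'_*$ for different linear functionals differ by multiplication by a unit $\langle a \rangle$ (since $s' = s(a \cdot -)$ for some $a \in L^*$, whence $s'_*(\varphi) = s_*(\langle a\rangle \varphi)$), and on $I^2$ modulo $I^3$ this twist is trivial because $\langle a \rangle \varphi \equiv \varphi \pmod{I^3}$ for $\varphi \in I^2$ — indeed $(\langle a\rangle - 1)\varphi = \langle\langle -a\rangle\rangle \cdot (-\varphi)/\dots \in I^3$. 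So once the $\mathrm{cor}$-compatibility is in hand for one convenient $s$ (say the trace), it holds for all, and the proposition follows. Alternatively, if one prefers to avoid Brauer groups entirely, one can use the reciprocity / Milnor exact sequence machinery already set up in the paper: compute $s_*(\langle 1,\pi\rangle\langle 1,-q\rangle)$ directly for a suitable $q$ and observe it lands on the nonzero class of $I^2(K)$; but the Brauer-group argument is shorter and more transparent.
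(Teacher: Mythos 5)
Your argument is correct in substance but takes a genuinely different route from the paper. The paper's proof is a two-line reduction: since $I^2(L)$ and $I^2(K)$ both have order $2$, it suffices to check that the generator of $I^2(L)$ (the unique anisotropic $4$-dimensional form, a $2$-fold Pfister form) is not killed by $s_*$, and this is read off directly from \cite[Ch.~6, Theorem 4.4]{scharlau-qhf}, which states that the transfer of that form is Witt-equivalent to the unique anisotropic $4$-dimensional form over $K$. You instead pass through the Clifford invariant $e_2: I^2/I^3 \to \mathrm{Br}(\cdot)[2]$, its compatibility with the Scharlau transfer via corestriction, and the local class field theory identity $\mathrm{inv}_K \circ \mathrm{cor}_{L/K} = \mathrm{inv}_L$. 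That works and is arguably more conceptual: it explains \emph{why} the map is an isomorphism, and your observation that replacing $s$ by $s(a\,\cdot)$ twists the transfer by $\form{a}$, which acts trivially on $I^2(L)$ because $I^3(L)=0$, is exactly the right reason the statement is independent of the choice of $s$. The cost is that the $e_2$--corestriction compatibility is itself a nontrivial theorem (Arason), whereas the paper's single citation already packages everything needed for local fields.

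One caution: your justification that $s_*(I^2(L)) \subseteq I^2(K)$ does not follow from Frobenius reciprocity as written. The projection formula controls $s_*$ of products where one factor is extended from $W(K)$, but a product of two elements of $I(L)$ need not have such a factor, so the inclusion $s_*(I(L)\cdot I(L)) \subseteq I(K)\cdot s_*(I(L))$ is not automatic. The containment $s_*(I^n(L)) \subseteq I^n(K)$ is true in general (Arason), and for $n=2$ it can be checked elementarily via the dimension and discriminant of the transferred form; in the present situation it also drops out of the explicit identification of $s_*(\rho)$ as the anisotropic $4$-dimensional form over $K$, which lies in $I^2(K)$. So this is a citation slip rather than a real gap, but the reciprocity argument should not be left as the justification for that step.
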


\begin{bew}
Recall that $I^2(L)$ and $I^2(K)$ have $2$ elements,
so it suffices to prove that the non-zero element of $I^2(L)$
maps to the non-zero element of $I^2(K)$.
Let $\varphi$ be a 4-dimensional anisotropic Pfister form over $L$,
this means that $\varphi \neq 0$ in $I^2(L)$.
From \cite[Ch.~6, Theorem 4.4]{scharlau-qhf},
it follows that $s_*(\varphi)$ is Witt-equivalent to 
the unique anisotropic 4-dimensional quadratic form over $K$,
so $s_*(\varphi) \neq 0$ in $W(K)$. 
\end{bew}

\begin{prop}\label{overK}
Let $c \in K^*$ and $q \in K[t]$ be a monic irreducible polynomial.
Then
\[
	\Leg{c}{q} = \Leg{c}{t}^{\deg q}
.\]
\end{prop}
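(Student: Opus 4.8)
The plan is to compute both sides as elements of $I^2(K) \cong \Z/2\Z$ by pushing everything down to $K$ via a transfer map and using multiplicativity of the Legendre symbol in the lower argument. Write $\Leg{c}{q} = \form{1,\pi}\form{1,-c} \in I^2(K(\al))$, where $\al$ is a root of $q$; note that since $c \in K^*$, the form $\form{1,\pi}\form{1,-c}$ is already defined over $K$, and its image in $I^2(K(\al))$ is obtained by base change along $K \hookrightarrow K(\al)$. So the first step is to fix a nonzero $K$-linear functional $s \colon K(\al) \to K$ and apply the transfer $s_*$. By Proposition~\ref{transfer}, $s_*$ is an isomorphism $I^2(K(\al)) \xrightarrow{\sim} I^2(K)$, so $\Leg{c}{q} = 1$ if and only if $s_*\big(\form{1,\pi}\form{1,-c}_{K(\al)}\big) = 0$ in $W(K)$.

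The key mechanism is Frobenius reciprocity (the projection formula) for transfers: $s_*(\psi_{K(\al)} \otimes \chi) = \psi \otimes s_*(\chi)$ for $\psi$ a form over $K$ and $\chi$ a form over $K(\al)$. Applying this with $\psi = \form{1,\pi}\form{1,-c}$ and $\chi = \form{1}$ the one-dimensional unit form over $K(\al)$, we get
\[
	s_*\big(\form{1,\pi}\form{1,-c}_{K(\al)}\big) = \form{1,\pi}\form{1,-c} \otimes s_*(\form{1}) \quad\text{in } W(K).
\]
Now $s_*(\form{1})$ is the trace form of $K(\al)/K$ twisted by $s$, a $(\deg q)$-dimensional form over $K$. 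The point is that, modulo the ideal $I^3(K) = 0$, tensoring the $2$-fold Pfister form $\form{1,\pi}\form{1,-c}$ with a $d$-dimensional form $\sigma = \form{d_1,\ldots,d_d}$ gives $\bigorthosum_{i=1}^d d_i\cdot\big(\form{1,\pi}\form{1,-c}\big)$, and each summand $d_i\cdot\form{1,\pi}\form{1,-c}$ equals $\form{1,\pi}\form{1,-c}$ in the Witt ring because $\form{-1}\varphi = \varphi$ for $\varphi \in I^2(K)$ and $d_i \cdot \varphi$ differs from $\varphi$ only by elements of $I^3(K) = 0$ (concretely, $\form{1,\pi}\form{1,-c} \otimes \form{1,d_i} \in I^3(K) = 0$, so $d_i\cdot\varphi = \varphi$). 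Hence $s_*\big(\form{1,\pi}\form{1,-c}_{K(\al)}\big) = (\dim s_*(\form{1}))\cdot \form{1,\pi}\form{1,-c} = (\deg q)\cdot \form{1,\pi}\form{1,-c}$ in $W(K)/I^3(K) = W(K)$, which as an element of $\Z/2\Z$ is $\deg q$ times the class of $\form{1,\pi}\form{1,-c}$.

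It remains to identify the class of $\form{1,\pi}\form{1,-c}$ in $I^2(K) \cong \Z/2\Z$ with $\Leg{c}{t}$: by definition $\Leg{c}{t} = \form{1,\pi}\form{1,-c} \in I^2(K)$, since $t$ has the root $0$ and $c(0) = c$. Therefore the transfer computation gives $\Leg{c}{q} = \Leg{c}{t}^{\deg q}$, interpreting the group $I^2(K) \cong \Z/2\Z$ multiplicatively as the symbol does. The main obstacle I anticipate is bookkeeping the transfer formalism carefully — making sure the projection formula is applied to the correct base-changed form and that $s_*(\form{1})$ really is $(\deg q)$-dimensional — together with the verification that $d_i\cdot\form{1,\pi}\form{1,-c} = \form{1,\pi}\form{1,-c}$ for all scalars $d_i$, which is exactly where $I^3(K) = 0$ is used; everything else is a routine consequence of Proposition~\ref{transfer} and the structure of $W(K)$. (An alternative, avoiding transfers, would be to induct on $\deg q$ using the factorization of $q$ over an extension, but the transfer argument is cleaner and matches the tools already set up in Proposition~\ref{transfer}.)
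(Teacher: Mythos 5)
Your proposal is correct and follows essentially the same route as the paper: fix a nonzero $K$-linear functional $s\colon K(\al)\to K$, use Proposition~\ref{transfer} to reduce the question to the vanishing of $s_*\big(\form{1,\pi}\form{1,-c}\big)$ in $W(K)$, and apply Frobenius reciprocity to get $\form{1,\pi}\form{1,-c}\otimes s_*(\form{1})$, which equals $(\deg q)\cdot\form{1,\pi}\form{1,-c}$ in $W(K)$. The only cosmetic difference is that the paper cites Scharlau for the final scaling step, whereas you derive it directly from $I^3(K)=0$.
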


\begin{bew}
Let $\al$ be a root of $q$.
Let $\varphi = \form{1,\pi} \form{1,-c}$ considered in $W(K(\alpha))$.
Let $s: K(\al) \twoheadrightarrow K$ be a $K$-linear map.
Proposition~\ref{transfer} says that $\Leg{c}{q} = 1$
if and only if $s_*(\varphi) = 0$.
By \cite[Ch.~2, Theorem 5.6 and Lemma 5.8]{scharlau-qhf}, we have
\begin{align*}
	s_*(\varphi) &= \form{1,\pi} \form{1,-c} s_*(\form{1}_L) \quad \text{(in $W(K)$)} \\
		&=
		\left\{\begin{aligned}
			\form{1,\pi} \form{1,-c} & \quad \text{($\deg q$ odd)} \\
			0 & \quad \text{($\deg q$ even)}
		\end{aligned}\right.
.\end{align*}
This proves the proposition.
\end{bew}

Next, we want to find a quadratic reciprocity law.
For $p$ a monic irreducible polynomial of degree $n$ we define the $K$-linear map
\[s_p:K[t]/(p)\to K\]
by $s_p(1)=s_p(t)=\ldots=s_p(t^{n-2})=0$, $s_p(t^{n-1})=1$. This map gives us the transfer homo\-morphism
\[(s_p)_\ast:W(K[t]/(p))\to W(K).\]
For the prime at infinity, let $(s_\infty)_\ast=-id$. Then

\begin{st}\cite[Ch.~6, Theorem 3.5]{scharlau-qhf}\label{scharlau-exact-seq}
Let $K$ be a field of characteristic $\neq 2$.
Let $\delta_p:W(K(t))\to W(K[t]/(p))$ be the second residue class map
with respect to the irreducible polynomial $p$ or $t^{-1}$ in case $p=\infty$
and let $(s_p)_\ast$ be the transfer homomorphism given above.
For $\delta = \bigoplus \delta_p$ and $s_* = \sum(s_p)_*$ the following sequence is exact:
\[
	W(K(t)) \stackrel{\delta}{\to}
	\bigoplus_{p,\infty} W(K[t]/(p)) \stackrel{s_\ast}{\to}
	W(K) \to 0
.\] 
In particular,
\[\sum (s_p)_\ast\delta_p(\varphi)=0 \quad \text{(in $W(K)$)}\]
for every form $\varphi\in K(t)$.
\end{st}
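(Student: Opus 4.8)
The plan is to separate the statement into its three formal pieces and observe that almost everything reduces to the ``in particular'' claim, i.e.\ to the reciprocity law $\sum_p (s_p)_\ast\delta_p(\varphi)=0$. Surjectivity of $s_\ast$ (exactness at $W(K)$) is immediate, since under the identification $K[t^{-1}]/(t^{-1})\cong K$ the summand at $\infty$ is already mapped by $(s_\infty)_\ast=-\mathrm{id}$ isomorphically onto $W(K)$. The inclusion ``image of $\delta$ $\subseteq$ kernel of $s_\ast$'' is exactly the reciprocity law. For the reverse inclusion I would argue by a diagram chase off Milnor's exact sequence (Theorem~\ref{milnorexactsequence}), which controls the finite primes alone: given a family $(\xi_p)_{p,\infty}$ with $\sum_p (s_p)_\ast\xi_p=0$, Milnor's sequence yields a form $\varphi\in W(K(t))$ with $\delta_p(\varphi)=\xi_p$ for every \emph{finite} $p$; the reciprocity law then pins down the last component, $\delta_\infty(\varphi)=\sum_{p\ \mathrm{finite}}(s_p)_\ast\delta_p(\varphi)=\sum_{p\ \mathrm{finite}}(s_p)_\ast\xi_p=\xi_\infty$ (using $(s_\infty)_\ast=-\mathrm{id}$ and the hypothesis for the last equality), so $\delta(\varphi)=(\xi_p)_{p,\infty}$. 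Hence the whole theorem comes down to the reciprocity law.

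To prove the reciprocity law I would check it on a generating set of $W(K(t))$. As an abelian group $W(K(t))$ is generated by the one-dimensional forms $\form{f}$ with $f\in K(t)^\ast$, and, clearing square factors, we may take $f=c\,p_1\cdots p_r$ with $c\in K^\ast$ and $p_1,\dots,p_r$ distinct monic irreducible polynomials. For such an $f$ the residues are easy to read off: $\delta_q(\form{f})=0$ for a finite prime $q\notin\{p_1,\dots,p_r\}$; $\delta_{p_j}(\form{f})=\form{\,\overline{c\prod\nolimits_{i\neq j}p_i}\,}$; and, substituting $t=u^{-1}$ and discarding the square $u^{-2\deg f}$, $\delta_\infty(\form{f})$ is $0$ if $\deg f$ is even and equals $\form{c}$ (the leading coefficient of $f$) if $\deg f$ is odd. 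So the reciprocity law for $\form{f}$ becomes the single identity
\[
	\sum_{j=1}^{r}(s_{p_j})_\ast\form{\,\overline{c\prod\nolimits_{i\neq j}p_i}\,}
	\;=\;
	\begin{cases}\form{c}&(\deg f\ \mathrm{odd}),\\ 0&(\deg f\ \mathrm{even}),\end{cases}
	\qquad\text{in }W(K).
\]

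Finally, this last identity is a statement about trace forms. Using the standard description of the transfer (Scharlau, Ch.~2, \S\,5, already invoked for Propositions~\ref{transfer} and~\ref{overK}) together with the classical Euler relations $\mathrm{Tr}_{(K[t]/(p))/K}(t^{i}/p'(\alpha))=0$ for $0\le i<\deg p-1$ and $=1$ for $i=\deg p-1$, the functional $s_p$ is the trace form $x\mapsto\mathrm{Tr}(x/p'(\alpha))$, so $(s_{p_j})_\ast\form{\bar g}$ is the trace form $x\mapsto\mathrm{Tr}\bigl(g(\alpha_j)x^2/p_j'(\alpha_j)\bigr)$. The left-hand side above is then the trace form read off from the partial fraction decomposition of $c/f$, and the right-hand side is its ``residue at infinity''; the identity is precisely the vanishing of the total residue of the rational function $c/f(t)$, lifted to Witt classes. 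I would conclude either by carrying out this partial-fractions bookkeeping directly, or by deducing it from the known reciprocity law for tame symbols in Milnor $K$-theory $K_2$ via the compatibility of those symbols with the maps $\delta_p$ on $I^2/I^3$. I expect this last step --- matching each local transfer with its correct factor so that the finite contributions telescope against the one at infinity --- to be the only genuine obstacle; everything preceding it is formal.
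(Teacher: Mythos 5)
The paper offers no proof of this statement; it is quoted verbatim from Scharlau. Your outline is, in substance, a correct reconstruction of the textbook argument: surjectivity at $W(K)$ via the $\infty$-component, reduction of exactness in the middle to the reciprocity law by the diagram chase against Milnor's sequence (Theorem~\ref{milnorexactsequence}), and verification of the reciprocity law on the additive generators $\form{f}$ with $f=c\,p_1\cdots p_r$ square-free; your computation of the residues $\delta_{p_j}$ and $\delta_\infty$ of $\form{f}$ is also correct. The one step you leave as a declared intention is in fact the crux, so let me record how it closes. Since $f'(\alpha_j)=p_j'(\alpha_j)\cdot\bigl(c\prod_{i\neq j}p_i\bigr)(\alpha_j)$, the class $\overline{c\prod_{i\neq j}p_i}/p_j'(\alpha_j)$ equals $1/f'(\alpha_j)$ up to squares, so each local transfer $(s_{p_j})_\ast\form{\overline{c\prod_{i\neq j}p_i}}$ is the form $x\mapsto\mathrm{Tr}(x^2/f'(\alpha_j))$ on $K[t]/(p_j)$; summing over $j$ and using the Euler relations $\sum_{\beta}\beta^i/f'(\beta)=0$ for $i\le\deg f-2$ and $=1/c$ for $i=\deg f-1$, the left-hand side is the form $x\mapsto s(x^2)$ on $K[t]/(f)$, where $s$ kills $1,t,\dots,t^{\deg f-2}$ and sends $t^{\deg f-1}$ to $1/c$. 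Its Gram matrix in the basis $1,t,\dots,t^{\deg f-1}$ vanishes above the anti-diagonal and equals $1/c$ on it, so it is hyperbolic for $\deg f$ even and Witt-equivalent to $\form{1/c}=\form{c}$ for $\deg f$ odd --- exactly the computation (Scharlau, Ch.~2, Lemma~5.8) the paper already invokes in Proposition~\ref{overK}. Two caveats: the identification $s_p(x)=\mathrm{Tr}(x/p'(\alpha))$ and the Euler relations presuppose that $f$ is separable, which is automatic in the paper's characteristic-zero setting but not over an arbitrary field of characteristic $\neq 2$ (where one must argue directly with the functionals $s_p$); and as written your argument is a proof sketch whose hardest step is pointed at rather than executed, though the tools you name do carry it through.
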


Using this theorem, we can prove a reciprocity law for our symbol.

\begin{st}
Let $p(t)$ and $q(t)$ be monic irreducible polynomials over $K$.
Then
\begin{equation}\label{reciproc-law}
	\Leg{p}{q} = \Leg{-1}{t}^{\deg p \deg q} \Leg{q}{p}
.\end{equation}
\end{st}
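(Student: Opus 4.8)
The plan is to apply Theorem~\ref{scharlau-exact-seq} to the $3$-fold Pfister form $\varphi = \form{1,\pi}\form{1,-p(t)}\form{1,-q(t)}$ over $K(t)$, which gives $\sum_{\ell,\infty} (s_\ell)_*\delta_\ell(\varphi) = 0$ in $W(K)$, the sum ranging over all monic irreducible $\ell \in K[t]$ together with the prime at infinity. First I would show that $\delta_\ell(\varphi) = 0$ whenever $\ell$ is coprime to both $p$ and $q$: in that case all the entries $1, \pi, -p, -q$ of $\varphi$ (and hence all products of them) are units at $\ell$, so $\varphi$ has trivial second residue form. Thus only three terms survive: the one at $q$, the one at $p$, and the one at infinity.

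Next I would identify the term at $q$. By definition $\delta_q(\varphi) = \Leg{p}{q}$ (an element of $I^2(K(\al)) \cong \Z/2\Z$ where $\al$ is a root of $q$), and the transfer map $s_q$ used in Theorem~\ref{scharlau-exact-seq} is a nonzero $K$-linear map $K[t]/(q) \to K$, so by Proposition~\ref{transfer} it carries the nonzero element of $I^2(K(\al))$ to the nonzero element of $I^2(K)$. Hence $(s_q)_*\delta_q(\varphi)$ equals $0$ or the nonzero class of $I^2(K)$ according to whether $\Leg{p}{q} = 1$ or $-1$; writing everything additively in $\Z/2\Z \cong I^2(K)$, this term contributes ``$\Leg{p}{q}$''. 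Symmetrically the term at $p$ contributes ``$\Leg{q}{p}$''. So the reciprocity law will follow once I compute the contribution at infinity and check it equals ``$\Leg{-1}{t}^{\deg p \deg q}$''.

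The main work is therefore the term at infinity. Here $\delta_\infty$ is the second residue map with respect to $t^{-1}$, and $(s_\infty)_* = -\mathrm{id}$. To compute $\delta_\infty(\varphi)$ I would substitute $u = t^{-1}$, write $-p(t) = -t^{\deg p}(\text{unit at }u=0)$ and similarly for $q$, using that $p,q$ are monic so the leading coefficient contributes nothing to the square class; this lets me compute the residue form of $\form{1,\pi}\form{1,-p}\form{1,-q}$ at $u = 0$. The $\form{1,\pi}$ factor has trivial residue, and the residue of $\form{1,-p}\form{1,-q}$ at infinity reduces, modulo squares, to something governed by the parities of $\deg p$ and $\deg q$: when both degrees are odd one gets a nontrivial residue involving $\form{1,-1}$-type terms of multiplicity $\deg p\deg q$, i.e.\ the class of $\form{1,\pi}\form{1,-(-1)^{\deg p \deg q}}$, and when either degree is even the residue vanishes. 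Passing through $(s_\infty)_* = -\mathrm{id}$ (which is the identity on the $2$-torsion group $I^2(K)$) and comparing with the definition $\Leg{-1}{t} = \form{1,\pi}\form{1,-(-1)} = \form{1,\pi}\form{1,1}$ evaluated at a root of $t$, I would conclude that the infinity term contributes exactly ``$\Leg{-1}{t}^{\deg p\deg q}$''. Summing the three surviving terms and setting the total to zero in $\Z/2\Z$ then rearranges (every element being its own inverse) into \eqref{reciproc-law}.

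The step I expect to be the main obstacle is the careful bookkeeping of the residue form at infinity: one has to dehomogenize correctly, track which entries of the $8$-dimensional Pfister form are units versus uniformizers at $u=0$, and extract the square class of the leading behavior of $-p$ and $-q$ — in residue characteristic $2$ the naive ``it's $(-1)^{\deg}$ up to squares'' reasoning needs the extra slack from Hensel-type lemmas such as Lemma~\ref{same-sq-cl}, rather than a bare computation modulo the prime. Everything else — the vanishing of the irrelevant residues, the transfer computations via Proposition~\ref{transfer}, and the final arithmetic in $\Z/2\Z$ — is routine once the infinity term is pinned down.
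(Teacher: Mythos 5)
Your proposal is correct and follows essentially the same route as the paper: apply Theorem~\ref{scharlau-exact-seq} to $\form{1,\pi}\form{1,-p(t)}\form{1,-q(t)}$, observe that only the residues at $p$, $q$ and $\infty$ survive, identify the first two with $\Leg{p}{q}$ and $\Leg{q}{p}$ via Proposition~\ref{transfer}, and compute $\delta_\infty$ by a parity case analysis giving $\form{1,\pi}\form{-1,(-1)^{\deg p \deg q}}$. One small reassurance: the difficulty you anticipate at infinity in residue characteristic $2$ does not arise, because $\delta_\infty$ is taken with respect to the $t^{-1}$-adic valuation whose residue field is $K$ itself (characteristic $0$), so the unit part of $-p(t)$ reduces to $-1$ exactly and no Hensel-type lemma such as Lemma~\ref{same-sq-cl} is needed.
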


\begin{bew}
Consider the quadratic form $\pf{1}{\pi}\pf{1}{-p(t)}\pf{1}{-q(t)}$.
The second residue class map applied to this form is trivial, except possibly at $p$, $q$ and $\infty$.
In those cases we have that
\[\delta_p (\pf{1}{\pi}\pf{1}{-p(t)}\pf{1}{-q(t)})=-\pf{1}{\pi}\pf{1}{-q(\beta)},\]
where $\beta$ is a root of $p$, and
\[\delta_q (\pf{1}{\pi}\pf{1}{-p(t)}\pf{1}{-q(t)})=-\pf{1}{\pi}\pf{1}{-p(\alpha)},\]
where $\alpha$ is a root of $q$.
We claim that
\[
	\delta_\infty\big(\form{1,\pi}\form{1,-p(t)}\form{1,-q(t)}\big)
		= \form{1,\pi} \form{-1, (-1)^{\deg p \deg q}}
.\]
Indeed, we find
\begin{align*}
	\delta_\infty\big(\form{1,\pi}\form{1,-p(t)}\form{1,-q(t)}\big)
		&= 0 = \form{1,\pi} \form{-1, 1}
		&& \text{($\deg p$ even, $\deg q$ even)},\\
	\delta_\infty\big(\form{1,\pi}\form{1,-p(t)}\form{1,-q(t)}\big)
		&= \form{1,\pi} \form{-1, 1}
		&& \text{($\deg p$ odd, $\deg q$ even)},\\
	\delta_\infty\big(\form{1,\pi}\form{1,-p(t)}\form{1,-q(t)}\big)
		&= \form{1,\pi} \form{-1, -1}
		&& \text{($\deg p$ odd, $\deg q$ odd)}.
\end{align*}
From Theorem \ref{scharlau-exact-seq}, it follows that 
\[(s_p)_\ast (\pf{1}{\pi}\pf{1}{-q(\beta)}) +(s_q)_\ast (\pf{1}{\pi}\pf{1}{-p(\alpha)})-\pf{1}{\pi}\pf{-1}{(-1)^{\deg p \deg q}}= 0.\]
By definition and multiplicativity of the symbol, we have that 
\[\pf{1}{\pi}\pf{1}{-(-1)^{\deg p \deg q}} = \Leg{-1}{t}^{\deg p \deg q}.\]
From Proposition~\ref{transfer} now follows
\[\Leg{p}{q} = \Leg{-1}{t}^{\deg p \deg q} \Leg{q}{p}.\]
\end{bew}

\section{Isotropy of a certain class of quadratic forms}\label{isotr-qf}

In this section, $K$ denotes a p-adic field with fixed uniformizer $\pi$.
Let $\Ok$ denote the valuation ring of $K$.

%\begin{lem}\label{isotroop}
%Let $K$ be a field of characteristic different from $2$ with a henselian valuation $\varv$.
%Let $q = \form{q_1, \dots, q_n}$ and $q' = \form{q_1+m_1, \dots, q_n+m_n}$ be quadratic forms over $K$ with $\varv(m_i)>\varv(q_i)+\varv(4)$.
%If $q$ is isotropic, then $q'$ is isotropic. 
%\end{lem}

%\begin{proof}
%Let $x_i \in K$ be such that $\sum_{i=1}^n q_ix_i^2 = 0$ and such that not all $x_i$ are zero. Without loss of generality we may assume that $\varv(q_1x_1^2)\leq\varv(q_ix_i^2)$ for all $i=2,\ldots,n$. Let
%\begin{equation}\label{isotroopy}
%	f(y)=y^2+\sum_{i\geq2} \frac{(q_i+m_i)x_i^2}{(q_1+m_1)x_1^2}
%.\end{equation}
%We will use Hensel's Lemma to find a zero of $f$ in $K$ near $y = 1$. Since $\varv(q_i+m_i)=\varv(q_i)$, we have $f(y) \in \Ok[y]$. Furthermore, $f(1) = (\sum m_i x_i^2)/((q_1+m_1)x_1^2)$, so
%\begin{align*}
%	\varv(f(1))
%		&\geq \min_{i} \left( \varv(m_i x_i^2) \right) - \varv(q_1 x_1^2)\\
%		&= \min_{i} \left( \varv(m_i) - \varv(q_i) + \varv(q_i x_i^2) - \varv(q_1 x_1^2) \right) \\
		%&\geq \min_{i} \left( \varv(m) - \varv(q_i) \right) \\
%		&> \varv(4)=2\varv(f'(1))
%.\end{align*}
%By Hensel's Lemma (Theorem~\ref{hensel}), $f$ has a zero $y \in \Ok$ satisfying $\varv(y-1) > \varv(2)$, so in particular $y \neq 0$. Now \eqref{isotroopy} implies
%\[(q_1+m_1)(x_1 y)^2 + \sum_{i\geq2}(q_i+m_i)x_i^2=0.\]
%Since $x_1 y \neq 0$, this means that $q'$ is isotropic.
%\end{proof}

\begin{lem}\label{ideaal}
Let $R$ be a ring and $x, y \in R$ such that $(x,y) = (1)$.
Let $\rho \in R^{*}$ and $N \in \N$.
Then $(x + \rho y^N, x y^N) = (1)$.
\end{lem}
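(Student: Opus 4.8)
The plan is to show that the ideal $(x+\rho y^N, xy^N)$ contains $1$ by exhibiting, via elementary manipulations, that it contains both $x$ and $y$ up to units, and then invoking $(x,y)=(1)$. First I would note that since $\rho$ is a unit, the ideal $I := (x+\rho y^N, xy^N)$ equals $(x + \rho y^N, x \cdot \rho y^N)$, so $I$ contains $y^N \cdot (x + \rho y^N) - (x \rho y^N) = \rho y^{2N}$, hence $y^{2N} \in I$. On the other hand, $I$ also contains $x \cdot (x + \rho y^N) - x \cdot \rho y^N \cdot 1$... — more cleanly, from $x + \rho y^N \in I$ and $x y^N \in I$ I get $y^N(x + \rho y^N) - xy^N = \rho y^{2N} \in I$, and multiplying $x + \rho y^N$ by $x$ and subtracting $x^{?}$... let me instead argue directly with the coprimality hypothesis.

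The cleaner route: write $ax + by = 1$ for some $a, b \in R$. Raise to a high power, say $(ax+by)^{2N} = 1$; expanding, every term is divisible either by $x$ (in fact by $x$, from any term containing a factor of $x$) or by $y^{2N}$ — splitting the binomial expansion at the terms with at least one $x$ versus the single term $b^{2N} y^{2N}$, we get $1 = x u + y^{2N} v$ for suitable $u, v \in R$. Now I showed $\rho y^{2N} \in I$, hence $y^{2N} \in I$ (as $\rho$ is a unit), so $y^{2N} v \in I$. It remains to see $xu \in I$, equivalently $x \in I$: but $x = (x + \rho y^N) - \rho y^N$, and while $\rho y^N$ itself need not obviously lie in $I$, note $xy^N \in I$ and $\rho y^{2N} = y^N \cdot \rho y^N$; alternatively observe $x \cdot(\text{something})$. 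The most robust finish is: since $xy^N \in I$ and $y^{2N} \in I$ and $1 = xu + y^{2N} v$, multiply through by $y^N$ to get $y^N = x u y^N + y^{3N} v = u \cdot (xy^N) + y^N v \cdot y^{2N} \in I$. So $y^N \in I$. But $x + \rho y^N \in I$ and $y^N \in I$ give $x \in I$. Finally $1 = xu + y^{2N}v$ with both $x \in I$ and $y^{2N} \in I$ yields $1 \in I$, i.e.\ $(x + \rho y^N, xy^N) = (1)$.

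I would present this as: (1) from the two generators obtain $\rho y^{2N} \in I$ and hence $y^{2N} \in I$; (2) from $xy^N \in I$ and $y^{2N} \in I$ together with a coprimality relation $xu + y^{2N} v = 1$ (obtained by expanding $(ax+by)^{2N}$), deduce $y^N = (xy^N)u + (y^{2N})(y^N v) \in I$; (3) conclude $x = (x + \rho y^N) - \rho y^N \in I$ since $\rho y^N = \rho \cdot y^N \in I$; (4) conclude $1 = xu + y^{2N}v \in I$.

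The main obstacle is purely bookkeeping: making sure that the binomial expansion of $(ax + by)^{2N}$ genuinely splits into an $x$-multiple plus a $y^{2N}$-multiple (the exponent $2N$, rather than $N$, is what guarantees the leftover pure-$y$ term is $b^{2N}y^{2N}$ with exponent $\geq 2N$), and checking at each step that we only ever multiply elements of $I$ by ring elements — never divide — except when cancelling the unit $\rho$. There is no deep content; the only subtlety is choosing the exponents so that everything lands inside $I$, and one should double-check that $N=0$ or trivial-ring edge cases are harmless (if $N=0$ the claim reads $(x+\rho, x)=(1)$, true since it contains $\rho \in R^*$).
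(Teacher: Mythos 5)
Your proof is correct. It rests on the same underlying idea as the paper's — coprimality of $x$ and $y$ propagates to powers, and the two generators of $I$ can be combined to eliminate cross terms — but the execution is genuinely different. The paper first proves $(x^2, y^{2N}) = (1)$ abstractly by cubing the relation $(x,y)=(1)$ (obtaining $(x^3,x^2y,xy^2,y^3)=(1)\subseteq(x^2,y^2)$ and iterating to $(x^{2^n},y^{2^n})=(1)$), and then exhibits $x^2 = x(x+\rho y^N) - \rho\, x y^N$ and $y^{2N} = \rho^{-1}y^N(x+\rho y^N) - \rho^{-1}x y^N$ as elements of $I$, so that $(1)=(x^2,y^{2N})\subseteq I$. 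You instead write down an explicit Bezout relation $1 = xu + y^{2N}v$ by expanding $(ax+by)^{2N}$, obtain $\rho y^{2N}\in I$ by the same elimination of cross terms, and then bootstrap: multiplying the Bezout relation by $y^N$ gives $y^N\in I$, whence $x = (x+\rho y^N)-\rho y^N \in I$ and finally $1\in I$. This is in fact slightly stronger than what the paper establishes ($x^2\in I$ suffices for them, whereas you show $x\in I$ and $y^N\in I$, i.e.\ $I\supseteq(x,y^N)$). Both arguments are elementary and complete; yours trades the squaring induction for binomial bookkeeping, and your remarks on the choice of exponent $2N$ and on the $N=0$ edge case are accurate.
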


\begin{proof}
Cubing the relation $(x,y) = (1)$, we get $(x^3, x^2 y, x y^2, y^3) = (1)$.
Clearly, $(x^3, x^2 y, x y^2, y^3) \subseteq (x^2, y^2)$; therefore, $(x^2, y^2) = (1)$.
We can continue this process by induction to get $(x^{2^n}, y^{2^n}) = (1)$ for all $n$,
so also $(x^2, y^{2 N}) = (1)$.
Let $\I := (x + \rho y^N, x y^N)$.
One can easily check that $x^2 = x (x + \rho y^N) - \rho x y^N \in \I$
and $y^{2 N} = \rho\inv y^N(x + \rho y^N) - \rho\inv x y^N \in \I$.
It follows that $(1) = (x^2, y^{2 N}) \subseteq \I$, hence $\I = (1)$.
\end{proof}

The following is the main theorem regarding isotropy of quadratic forms.

\begin{st}\label{maaks}
Let $\gamma \in K^*$.
Let $g \in K[t]$ with $g(0) \neq 0$.
If all the vertices of the Newton polygon of $g$ have even degree,
then there exists an $s\in K[t]$
such that both quadratic forms
\begin{gather}
	\form{1,\pi} \form{1,-\gamma} \form{1,-s}, \label{maaks-vorm1}\\
	\form{1,\pi} \form{1,t g} \form{1,-t s} \label{maaks-vorm2}
.\end{gather}
are isotropic over $K(t)$.
\end{st}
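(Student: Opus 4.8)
The plan is to build $s$ directly from the factorization of $g$ according to the slopes of its Newton polygon, together with a carefully chosen prime $p(t)\in K[t]$ supplied by the Dirichlet density theorem (Theorem~\ref{dichtheid}) applied to the residue field. The first form \eqref{maaks-vorm1} is a $3$-fold Pfister form over $K$ (its entries lie in $K^*$ once $s$ is taken in $K^*$, or more generally one reduces to that case), and since $I^3(K)=0$ every such form is hyperbolic; so the real content is to choose $s$ so that \eqref{maaks-vorm2} becomes isotropic while keeping \eqref{maaks-vorm1} isotropic. I would first normalize: writing $g = c\prod_{j=1}^k g_j$ for the factorization according to the slopes $m_1<\dots<m_k$, each $g_j$ has even degree because every vertex of the Newton polygon has even degree, and $g_j$ has a single edge. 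I then look for $s$ of the form $s = \pi^{a} t^{b} \prod_j g_j(t)^{e_j}\cdot p(t)$ with exponents $a,b,e_j\in\{0,1\}$ and $p$ an auxiliary monic irreducible polynomial, so that $-ts$ and $tg$ become "compatible" for the Pfister form \eqref{maaks-vorm2} to vanish.

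The key mechanism is the Milnor exact sequence (Theorem~\ref{milnorexactsequence}) combined with the quadratic reciprocity symbol of Section~\ref{quadr-recipr}: the $3$-fold Pfister form $\form{1,\pi}\form{1,tg}\form{1,-ts}$ over $K(t)$ lies in $I^3(K(t))$, and I want to show it is hyperbolic by checking that all its second residue forms vanish and that its "value at infinity" (or its image under $i$ in the split exact sequence) vanishes. Concretely, $\delta_p$ of this form at a monic irreducible $p$ is controlled by the symbols $\Leg{tg}{p}$, $\Leg{-ts}{p}$ and the product of the values $tg(\beta)$, $-ts(\beta)$ in square classes at a root $\beta$ of $p$; using multiplicativity \eqref{multiplicativity}, Proposition~\ref{overK}, the reciprocity law \eqref{reciproc-law}, and Proposition~\ref{wortelinvullen} (which computes $g_j(\al)$ up to squares in terms of the leading or constant coefficient once $N$ is large enough), each such residue becomes an explicit expression in Legendre symbols of the $g_j$, of $\pi$, of $t$ and of $p$ against each other. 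Lemma~\ref{ideaal} is exactly what guarantees that after perturbing to get the single-edge "monomial-plus-tail" shape required by Proposition~\ref{wortelinvullen}, the relevant polynomials stay pairwise coprime so the symbols are defined and the residue computation is clean.

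The main obstacle — and where the density theorem enters — is arranging the single "bad" residue. After cancelling the residues at the primes dividing $g$, $t$, and $\infty$ by the choice of the exponents $a,b,e_j$, one is typically left with one residual condition that cannot be satisfied by the $g_j$'s alone; this is absorbed by introducing the prime $p(t)$ and forcing $\Leg{p}{q}$ (for each relevant $q$ among $t$ and the irreducible factors of the $g_j$) to take a prescribed value, plus a local condition at the prime of $K$ making $p(\al)$ land in the right square class. Theorem~\ref{dichtheid}, applied over the residue field $k$ of $K$ and then lifted, produces such a $p$: the congruence condition $c\equiv a\bmod b$ and the open-subgroup conditions $c\in x_\pr V_\pr$ encode exactly the prescribed Legendre-symbol values (each being a condition modulo squares, hence an open finite-index subgroup condition), and $cA=\pr_0$ makes $p$ a single prime. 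Once $p$ is chosen, one re-runs the residue computation: every $\delta_q$ vanishes by the reciprocity law and the chosen symbol values, $\delta_p$ vanishes because $-ts$ was built to be a square times $tg$ modulo $p$, and $\delta_\infty$ (together with the $I^3(K)=0$ input) vanishes by a degree/leading-coefficient bookkeeping using Proposition~\ref{overK}. Hence \eqref{maaks-vorm2} is hyperbolic, in particular isotropic, and \eqref{maaks-vorm1} is hyperbolic for free; this completes the proof. The delicate points I expect to spend the most care on are (i) verifying the coprimality and single-edge hypotheses needed to invoke Proposition~\ref{wortelinvullen} uniformly across all slopes, and (ii) checking that the finitely many prescribed symbol conditions are jointly consistent, i.e.\ that the product of all residues is forced to be trivial by Theorem~\ref{scharlau-exact-seq} so that the density theorem is being asked for something satisfiable.
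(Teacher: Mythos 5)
Your overall framework is the right one --- reduce isotropy of both Pfister forms to the vanishing of second residue forms via Theorem~\ref{milnorexactsequence}, translate these into Legendre-symbol conditions, use the reciprocity law to see that the conditions are consistent (this is indeed how the paper disposes of the condition at $t$), and use Theorem~\ref{dichtheid} to manufacture auxiliary irreducibles. But the specific ansatz $s=\pi^a t^b\prod_j g_j^{e_j}\,p(t)$ with a \emph{single} auxiliary prime $p$ cannot work, and the claim that one ``reduces to $s\in K^*$'' for \eqref{maaks-vorm1} is false. Since $\gamma$ is arbitrary (in the application it is chosen with $\form{1,\pi}\form{1,-\gamma}$ anisotropic over $K$), isotropy of \eqref{maaks-vorm1} forces $\Leg{\gamma}{q}=1$, i.e.\ $\deg q$ even by Proposition~\ref{overK}, for \emph{every} irreducible $q$ dividing $s$ to odd multiplicity. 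This rules out the factor $t^b$ with $b=1$, and it rules out the factors $g_j^{e_j}$ whenever the even-degree edge factor $g_j$ has an odd-degree irreducible factor (which happens exactly when the denominator $d_j$ of the slope is odd). So essentially all the work is pushed onto $p(t)$, and one prime cannot carry it: the conditions $\Leg{ts}{g_{ij}}=1$ live at roots $\al$ of different valuations $-m_i$, and to control the square class of $p(\al)$ one needs $p$ to have a one-edge Newton polygon of slope $m_i$ (this is the hypothesis of Proposition~\ref{wortelinvullen}); a single polynomial cannot have every slope simultaneously. The paper accordingly builds $s$ as a product $\eps\prod_{i,j}s_{ij}$ with one new even-degree irreducible of slope $m_i$ for each slope with $d_i$ odd, and --- by a completely different mechanism, absent from your proposal --- a small perturbation $s_{ij}=g_{ij}+p_{ij}$ of each irreducible factor for the slopes with $d_i$ even, where Lemma~\ref{same-sq-cl} (not Lemma~\ref{ideaal}, whose role is to verify the coprimality hypothesis $aA+bA=A$ of the density theorem) keeps the symbols against the other $g_{\mu\nu}$ unchanged.

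The second gap is the phrase ``Theorem~\ref{dichtheid}, applied over the residue field $k$ of $K$ and then lifted.'' Passing to the residue field is precisely the Kim--Roush step the paper is structured to avoid: for residue characteristic $2$ the square classes of $K(\al)$ are not visible in the residue field, and the relevant global field is not $k(t)$ but $k(\bar u)$ where $\bar u$ is the image of $\pi^{m_id_i}t^{d_i}$ in the quotient $R/P$ of the graded subring $R\subset K[t]$ attached to the slope $m_i$. Moreover the lift has to be arranged with care: the paper writes the chosen irreducible simultaneously as $a+qb$ (an arithmetic progression forcing the conditions at $t$ and at the $g_{ij}$ with $j$ arbitrary and $\kappa=i$) and as $r+\pi^{m_ie}t^e h_i$ with $\deg r\le \deg h_i+e-N$ (the ``monomial plus small tail'' shape that lets Proposition~\ref{wortelinvullen} compute $\Leg{s_i}{g_{\kappa\lambda}}$ for $\kappa\ne i$, which is your condition \eqref{vw-product}), and uses the condition at the infinite place of $k(\bar u)$ plus a Euclidean division in $R$ to reconcile the two shapes and to transfer irreducibility from $\overline{c}$ to $c$. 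Without some replacement for this double bookkeeping, your residue computation at the primes $g_{\kappa\lambda}$ with $\kappa\ne i$ has no way to get started.
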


\begin{bew}
Without loss of generality, we may assume that $g$ is a square-free polynomial
(we can divide out squared factors, this does not change the isotropy of \eqref{maaks-vorm2}).
Because of the factor $\form{1,\pi}$ appearing in \eqref{maaks-vorm2},
multiplying $g$ with some power of $\pi$ does not change the isotropy of that quadratic form.
Therefore, we may assume that the leading coefficient $\eps$ of $g$ has valuation zero.

Let $g = \eps \prod_{i=1}^n g_{i}$ be the factorization according to the slopes of $g$.
Write $g_i = \prod_j g_{i j}$, where the $g_{i j}$ are monic and irreducible.
Let $n_i$ denote the degree of $g_i$,
let $m_i = -\varv(g_i(0))/n_i$ denote the slope of $g_i$
and $d_i$ the denominator of $m_i \in \Q$.
Then the degree of every $g_{i j}$ must be a multiple of $d_i$.

Let $N$ be an odd integer which is a multiple of all odd $d_i$ and
large enough such that
\[
	N > \varv(4) / \min_{i \neq j} |m_i - m_j|
	\quad \text{and} \quad
	N > \deg g
.\]

In order to find $s$,
we write $s = \eps \prod_{i=1}^n \prod_j s_{i j}$ and define $s_i := \prod_j s_{i j}$.
The chosen $s_{i j}$ will satisfy the following properties:
\begin{compactenum}[(i)]
\item\label{propfirst} $s_{i j}$ is coprime to $t g$.
\item $s_{i j}$ is monic irreducible with slope $m_i$.
\item\label{evengraad} $s_{i j}$ has even degree.
\item\label{proplast} For all $i$, $\kappa$, $\lambda$ with $i \neq \kappa$, the following equality holds:
\begin{equation}\label{vw-product}
	\Leg{s_i}{g_{\kappa \lambda}} = \Leg{g_i}{g_{\kappa \lambda}}
.\end{equation}
\end{compactenum}

By Theorem~\ref{milnorexactsequence} it suffices to solve the quadratic forms
\eqref{maaks-vorm1} and \eqref{maaks-vorm2} locally
at primes associated with irreducible polynomials in $K[t]$ to solve them globally.
Indeed, if all the second residue maps of such a quadratic form are zero,
then this form is Witt equivalent to an anisotropic form $\varphi$ over $K$.
Since the dimension of $\varphi$ is at most $4$,
the isotropy of \eqref{maaks-vorm1} and \eqref{maaks-vorm2} follows.

The second residue map of (\ref{maaks-vorm1}) is trivially zero,
except at the irreducible factors $s_{i j}$ of $s$.
For each $s_{i j}$, this second residue form is $\form{1,\pi}\form{1,-\gamma}$ over $K[t]/(s_{i j})$.
So property~(\ref{evengraad}) is sufficient to prove that \eqref{maaks-vorm1} is isotropic:
Proposition~\ref{overK} implies that
$\Leg{\gamma}{s_{i j}} = 1$ for each irreducible factor $s_{i j}$ of $s$. 

To prove that $\form{1,\pi}\form{1,t g}\form{1,-t s}$ is isotropic over $K(t)$, we need to consider the second residue forms at $t$ and at each $g_{i j}$ and $s_{i j}$.  The isotropy of these forms is equivalent to the following three conditions:
\begin{align}
	\Leg{s g}{t} &= 1 \label{vw-sg},\\
	\Leg{t s}{g_{i j}} &= 1 \quad \text{for all $i$, $j$}, \label{vw-ts}\\
	\Leg{-t g}{s_{i j}} &= 1 \quad \text{for all $i$, $j$}. \label{vw-tg}
\end{align}

If conditions \eqref{vw-ts} and \eqref{vw-tg} are fulfilled, then \eqref{vw-sg} follows automatically
(because we chose the leading coefficient of $s$ to be equal to $\eps$):
\begin{align*}
	\Leg{s g}{t}
	&= \Leg{\eps}{t} \prod_{i,j} \Leg{s_{i j}}{t} \Leg{\eps}{t} \prod_{i,j} \Leg{g_{i j}}{t}
	= \prod_{i,j} \Leg{-1}{t}^{\deg s_{i j}} \Leg{t}{s_{i j}} \prod_{i,j} \Leg{-1}{t}^{\deg g_{i j}} \Leg{t}{g_{i j}} \\
	&= \Leg{-1}{t}^{\deg s} \Leg{-1}{t}^{\deg g}
		\prod_{i,j} \Leg{-g}{s_{i j}} \prod_{i,j} \Leg{s}{g_{i j}} \\
	&= \prod_{i,j} \left[\Leg{-\eps}{s_{i j}} \prod_{\mu,\nu} \Leg{g_{\mu \nu}}{s_{i j}}\right]
		\prod_{i,j} \left[\Leg{\eps}{g_{i j}} \prod_{\kappa,\lambda} \Leg{s_{\kappa \lambda}}{g_{i j}}\right] \\
	&= \prod_{i,j} \left[\Leg{-\eps}{t}^{\deg s_{i j}} \prod_{\mu,\nu} \Leg{-1}{t}^{\deg g_{\mu \nu} \deg s_{i j}} \Leg{s_{i j}}{g_{\mu \nu}}\right]
		\prod_{i,j} \left[\Leg{\eps}{t}^{\deg g_{i j}} \prod_{\kappa,\lambda} \Leg{s_{\kappa \lambda}}{g_{i j}}\right]
	= 1
\end{align*}
since the degree of $s$ and of $g$ is even.

Using multiplicativity and property~\eqref{vw-product}, we find
\begin{align*}
	\Leg{t s}{g_{i j}}
	&= \Leg{\eps t}{g_{i j}} \prod_\mu \Leg{s_\mu}{g_{i j}}
	= \Leg{\eps t}{g_{i j}} \Leg{s_i}{g_{i j}} \prod_{\mu \neq i} \Leg{g_\mu}{g_{i j}} \\
	&= \Leg{s_i}{g_{i j}} \Leg{t g/g_i}{g_{i j}}
.\end{align*}
Since the degree of each $s_{i j}$ is even, we have
\[
	\Leg{-t g}{s_{i j}}
	= \Leg{t}{s_{i j}} \Leg{-\eps}{s_{i j}}\prod_{\kappa,\lambda} \Leg{g_{\kappa \lambda}}{s_{i j}}
	= \Leg{s_{i j}}{t} \prod_{\kappa,\lambda} \Leg{s_{i j}}{g_{\kappa \lambda}}
.\]
Therefore, the conditions \eqref{vw-ts} and \eqref{vw-tg} become
\begin{align}
	\Leg{s_i}{g_{i j}} &= \Leg{t g/g_i}{g_{i j}}
		\quad \text{for all $i$, $j$}. \label{vw2} \\
	\Leg{s_{i j}}{t} &= \prod_{\kappa,\lambda} \Leg{s_{i j}}{g_{\kappa \lambda}}
		\quad \text{for all $i$, $j$}. \label{vw1}
\end{align}

Now we construct the $s_i$, satisfying properties (\ref{propfirst})--(\ref{proplast})
and conditions (\ref{vw2}) and (\ref{vw1}).
For this, we will have to distinguish two cases,
according to the parity of $d_i$, the denominator of the slope $m_i$.

\textbf{Case 1: $d_i$ is odd.} 

In this case, we will have only one irreducible factor $s_i = s_{i 1}$ with slope $m_i$.
Using \eqref{vw-product} and \eqref{vw2}, we can rewrite the right hand side of condition~(\ref{vw1}) as
\begin{align*}
	\prod_{\kappa,\lambda} \Leg{s_i}{g_{\kappa \lambda}}
	&= \prod_{\kappa\neq i,\lambda} \Leg{s_i }{g_{\kappa \lambda}}\prod_j \Leg{s_i }{g_{i j}}
	= \prod_{\kappa\neq i,\lambda,j} \Leg{g_{i j}}{g_{\kappa \lambda}}\prod_j \Leg{t g/g_i}{g_{i j}}\\
	&= \prod_{\kappa\neq i,\lambda,j} \Leg{-1}{t}^{\deg g_{\kappa \lambda} \deg g_{i j}} \Leg{g_{\kappa \lambda}}{g_{i j}}
		\prod_j \Leg{t g/g_i}{g_{i j}}\\
	&= \prod_{\kappa\neq i} \Leg{-1}{t}^{\deg g_\kappa \deg g_i} \prod_{j} \Leg{\eps^{-1} g/g_i}{g_{i j}}
		\prod_j \Leg{t g/g_i}{g_{i j}}\\
	&= \prod_j \Leg{\eps^{-1} t}{g_{i j}}
	= \prod_j \Leg{\eps^{-1}}{t}^{\deg g_{i j}} \Leg{-1}{t}^{\deg g_{i j}}\Leg{g_{i j}}{t}
	= \Leg{g_i}{t}
.\end{align*}
Therefore, condition~(\ref{vw1}) becomes
\begin{equation}\label{vw1bis}
	\Leg{s_i}{t} = \Leg{g_i}{t}
.\end{equation}
Let
\begin{align*}
	R &= \sum_{\{(\alpha,\beta) \in \Z^2 \mid \beta \geq 0 ~\wedge~ \alpha \geq m_i \beta\}} (\pi^\alpha t^\beta) \Ok \\
	P &= \sum_{\{(\alpha,\beta) \in \Z^2 \mid \beta \geq 0 ~\wedge~ \alpha > m_i \beta \}} (\pi^\alpha t^\beta) \Ok
.\end{align*}
Clearly, $R$ is an $\Ok$-module in $K[t]$, but one can check
that it is actually a subring.
Then $P$ is a prime ideal in $R$.
It is not hard to see that the usual Euclidean division for polynomials
works in $R$, provided we divide by a polynomial whose leading term is not in $P$.

Let $u := \pi^{m_i d_i} t^{d_i}$ and $k = \Ok/(\pi)$. From now on, a line over an element of $R$ denotes reduction modulo $P$.
The quotient ring $R/P$ is $k[\bar{u}]$
(the variable $\bar{u}$ is precisely the reduction of the element $u = \pi^{m_i d_i} t^{d_i}$).
For a polynomial $f(\bar{u}) \in k[\bar{u}]$, we define $\degt f := d_i \deg f$.
This is chosen such that $\degt \overline{f} = \deg f$
for all $f \in R$ with leading term not in $P$.

Define $h_i := \pi^{m_i n_i} g_i$.
Since $g_i$ is monic of degree $n_i$ and slope $m_i$, it follows that $h_i \in R$.
For $g_\mu$ with $m_\mu > m_i$, we have $\pi^{m_\mu n_\mu} g_\mu \in R$
and only the constant term of $\pi^{m_\mu n_\mu} g_\mu$ does not vanish modulo $P$.
For $g_\mu$ with $m_\mu < m_i$,
let $B_i \in 2 d_i \Z$ such that $B_i \geq n_\mu = \deg g_\mu$. 
Then $\pi^{B_i m_i} t^{B_i-n_\mu} g_\mu \in R$
and only the leading term of $\pi^{B_i m_i} t^{B_i-n_\mu} g_\mu$ does not vanish modulo $P$.
So we see that there exist $A, B \in \Z$ with $B$ even
such that $\pi^A t^B g/g_i = \eps \pi^A t^B \prod_{\mu \neq i} g_\mu \in R\setminus P$.
The reduction modulo $P$ of $\pi^A t^B g/g_i$ is of the form $\rho \bar{u}^G$
with $\rho \in k^*$ and $G \geq 0$.

We want to contruct $s_i$ using the ring $R$.
Since the conditions \eqref{vw2} and \eqref{vw1bis}
do not change if we multiply $s_i$ with a multiple of $\pi$,
in reality we will construct $c := \pi^{m_i \deg s_i} s_i \in R$.
We define
\begin{align*}
	a &:= h_i + \pi^{m_i N + A} t^{N+B} g/g_i \in R \\
	b &:= h_i u^{N/d_i+G} = \pi^{m_i (N + d_i G)} t^{N + d_i G} h_i \in R
.\end{align*}
We will construct $c$ of the form $c = a + q b$ for some $q \in R$.
Using the fact that $\Leg{t}{g_{i j}} = \Leg{t^{N+B}}{g_{i j}}$ and $g_{i j} \mid h_i$,
it is clear that $s_i := \pi^{-m_i \deg c} c = \pi^{-m_i \deg c} (a + q b)$ satisfies \eqref{vw2} and \eqref{vw1bis}.
At the same time, we will make sure that $c$
is also of the form $r + \pi^{m_i e} t^e h_i$
for some $e \in 2 d_i \Z$ and $r \in R$ with $\deg r \leq \deg h_i + e - N$.
We claim that for such $c$, the following holds:
\begin{equation}\label{c-over-gkl}
	\Leg{c}{g_{\kappa \lambda}} = \Leg{h_i}{g_{\kappa \lambda}}
	\quad \text{for all $\kappa$, $\lambda$ with $i \neq \kappa$}
.\end{equation}
Indeed, let $\al$ be a root of $g_{\kappa \lambda}$.
If $m_i < m_\kappa = -\varv(\al)$, then by Proposition~\ref{wortelinvullen},
the square class of $c(\al)$ in $K(\al)$ is the same as $\pi^{m_i e} h_i(\al)$.
This implies \eqref{c-over-gkl}.
If $m_i > m_\kappa$, then
the square class of $c(\al)$ in $K(\al)$ is the same as $a(\al)$.
Since $(g/g_i)(\al) = 0$, this also implies \eqref{c-over-gkl}.
Using $\Leg{s_i}{g_{\kappa \lambda}} = \Leg{c}{g_{\kappa \lambda}}$ and $h_i = \pi^{m_i n_i} g_i$,
it is clear that \eqref{c-over-gkl} implies \eqref{vw-product}.

The ideal $P + (u)$ in $R$
contains all $\pi^\alpha t^\beta \in R$, except for $\pi^0 t^0$.
The constant term of $h_i$ has valuation zero, therefore
$(h_i) + P + (u) = (1)$.
Note that $\overline{a} = \overline{h_i} + \rho \bar{u}^{N/d_i + G}$
and $\overline{b} = \overline{h_i} \bar{u}^{N/d_i + G}$.
Since $(\overline{h_i}, \bar{u}) = (1)$ in $R/P$,
Lemma~\ref{ideaal} implies $(\overline{a}, \overline{b}) = (1)$.

Let $N' := N/d_i$.
We apply Theorem \ref{dichtheid} to $k = R/(P + (u))$, $F = k(\bar{u})$, $S_\infty = \{\pr_\infty\}$, then $A = k[\bar{u}]$.
Take
\[
	V_\infty =\{\bar{u}^{e'} + c_{e'-N'}\bar{u}^{e'-N'} + c_{e'-N'-1}\bar{u}^{e'-N'-1} + \dots + c_0 + c_{-1}\bar{u}^{-1} + \dots \mid e' \in 2\Z\}
		\subseteq k((\bar{u}^{-1})) = k_\infty
\]
and $x_\infty = \overline{h_i}$.

Because of Theorem~\ref{dichtheid}, there exist infinitely many $\overline{q_1} \in k[\bar{u}]$
such that $\overline{c} := \overline{a} + \overline{q_1} \overline{b} \in k[\bar{u}]$ is irreducible
and in $\overline{h_i} V_\infty$.
There are infinitely many, so we may assume that $\degt \overline{c} \geq N + \deg b$.

Since $\overline{c} \in \overline{h_i} V_\infty$,
we can write $\overline{c} = \overline{h_i} (\bar{u}^{e'} + \overline{r_0})$,
with $\overline{r_0} \in k((\bar{u}\inv))$ such that $\deg \overline{r_0} \leq e' - N'$.
Let $\overline{r_1} = \overline{h_i} \overline{r_0}  = \overline{c} - \overline{h_i} \bar{u}^{e'} \in k[\bar{u}]$,
then $\degt \overline{r_1} \leq n_i + e - N$.
Choose a lift $r_1 \in R$ of $\overline{r_1}$ such that $\deg r_1 = \degt \overline{r_1}$.
Now let $\tilde{c} = r_1 + \pi^{m_i e} t^e h_i \in R$,
then $\tilde{c}$ is a lift of $\overline{c}$.

Let $q_1 \in R$ be a lift of $\overline{q_1}$.
Lifting the equality $\overline{c} = \overline{a} + \overline{q_1} \overline{b}$ to $R$
yields an error term which is in $P$,
so there exists an $f \in P$ such that
\begin{equation}\label{lift1}
	\tilde{c} + f = a + q_1 b
.\end{equation}
Since the leading term of $b$ is not in $P$, we can do Euclidean division
of $f$ by $b$:
let $f = q_2 b + r_2$ with $\deg r_2 < \deg b$.
Plugging this into \eqref{lift1} gives $\tilde{c} + r_2 = a + (q_1 - q_2) b$.

Reducing the equality $f = q_2 b + r_2$ modulo $P$ gives $0 = \overline{q_2} \overline{b} + \overline{r_2}$.
The leading term of $b$ does not vanish modulo $P$,
so $\degt \overline{r_2} \leq \deg r_2 < \deg b = \degt \overline{b}$.
Since $\overline{r_2}$ is a multiple of $\overline{b}$, it follows that $\overline{r_2} = 0$.

Define $c := \tilde{c} + r_2$, $q := q_1 - q_2$ and $r := r_1 + r_2$.
Then
\[
	c = a + q b = r + \pi^{m_i e} t^e h_i
,\] which is of the required form.
It remains to check that $c$ is irreducible.
Suppose $c$ is reducible in $K[t]$, so $c = c_1 c_2$ with $c_1, c_2 \in K[t]$.
Without loss of generality we can assume that $c_1(0)=1$.
Since $c_2(0) = c(0) = h_i(0)$ is a unit and $c_1$ and $c_2$ have slope $m_i$,
it follows that $c_1, c_2 \in R$.
Therefore, we can reduce modulo $P$ to find $\overline{c} = \overline{c_1} \, \overline{c_2}$.
Now the irreducibility of $\overline{c}$ together with $\degt \overline{c} = \deg c$
implies that $c$ is irreducible.

\textbf{Case 2: $d_i$ is even}.

In this case, every $g_{i j}$ has even degree.
The previous method will not work because every odd degree monomial in $R$ becomes zero in $R/P$.
Instead we will find for each monic irreducible factor $g_{i j}$ of $g_i$ an irreducible $s_{i j}$.
We set
\[
	s_{i j} = g_{i j} + p_{i j}
\]
with $p_{i j} \in R$ with $\deg p_{i j} < \deg g_{i j}$ still to be determined.
The coefficients of $p_{i j}$ will be chosen to have large valuation.
By \cite[Ch.~II, \S\,2, exercice 2]{serre-corpslocaux}, $s_{i j}$ will be irreducible if the valuation
of $p_{i j}$ is sufficiently large (depending on $s_{i j}$).

Next, we want to check that 
\begin{equation}\label{vw-evennoemer}
	\Leg{g_{i j}+p_{i j}}{g_{\mu \nu}} = \Leg{g_{i j}}{g_{\mu \nu}}
	\qquad \text{for all } (\mu,\nu)\neq (i,j)
.\end{equation}
This follows from Lemma~\ref{same-sq-cl} if $\varv(p_{i j}(\al))>\varv(g_{i j}(\al))+\varv(4)$
for each root $\al$ of $g_{\mu \nu}$ with $(\mu,\nu)\neq (i,j)$.
%because then $g_{i j}(\al)$ and $(g_{i j}+p_{i j})(\al)$ are in the same square class. 
Since $g_{i j}(\al)\neq0$ for each root $\al$ of $g_{\mu \nu}$ with $(\mu, \nu)\neq (i,j)$,
we can enforce this condition by taking the coefficients of $p_{i j}$ to have large enough valuation.
Clearly, \eqref{vw-evennoemer} implies that \eqref{vw-product} is satisfied.

Using \eqref{vw-evennoemer}, we rewrite condition~(\ref{vw2}):
\begin{align*}
	\prod_\nu \Leg{s_{i \nu}}{g_{i j}} &= \Leg{t g/g_i}{g_{i j}} \\
	\Leg{s_{i j}}{g_{i j}} \prod_{\nu \neq j} \Leg{g_{i \nu}}{g_{i j}} &= \Leg{t g/g_i}{g_{i j}} \\
	\Leg{p_{i j}}{g_{i j}} &= \Leg{t g/g_{i j}}{g_{i j}} \\
\end{align*}
This condition will be satisfied if we choose $p_{i j}$ such that $p_{i j} \equiv \pi^A t g/g_{i j} \pmod{g_{i j}}$
with $\deg p_{i j} < \deg g_{i j}$ for a large enough $A$.
%such that $p_{i j}\in M$ and $\varv(p_{i j}(\al))>\varv(g_{i j}(\al))+\varv(4)$
%for each root $\al$ of $g_{\mu \nu}$ with $(\mu,\nu)\neq (i, j)$, this condition is fulfilled.
Using the fact that $g_{i j}$ has even degree, the right hand side of \eqref{vw1} becomes
\begin{align*}
	\prod_{\kappa, \lambda} \Leg{s_{i j}}{g_{\kappa \lambda}}
	&= \Leg{s_{i j}}{g_{i j}} \prod_{(\kappa, \lambda) \neq (i,j)} \Leg{g_{i j}}{g_{\kappa \lambda}}
	= \Leg{p_{i j}}{g_{i j}} \prod_{(\kappa, \lambda) \neq (i,j)} \Leg{g_{\kappa \lambda}}{g_{i j}} \\
	&= \Leg{t g/g_{i j}}{g_{i j}} \Leg{\eps^{-1} g/g_{i j}}{g_{i j}}
	= \Leg{\eps^{-1}}{g_{i j}} \Leg{t}{g_{i j}}
	= \Leg{g_{i j}}{t}
.\end{align*}
Since $s_{i j}(0)$ and $g_{i j}(0)$ are in the same square class,
this is equal to $\Leg{s_{i j}}{t}$, therefore \eqref{vw1} is satisfied.
\end{bew}

This easily implies the following corollary,
which corresponds to \cite[Theorem 17]{kim-roush-padic}.

\begin{gev}\label{form-iso}
Let $K$ be a p-adic field with uniformizer $\pi$ and let $\gamma \in K^*$.
Let $g \in K[t]$ with $g(0) \neq 0$.
If all the vertices of the Newton polygon of $g$ have even degree, then the quadratic form
\[
	\form{1,\pi} \form{1, - \gamma, -t, -g}
\]
is isotropic over $K(t)$.
\end{gev}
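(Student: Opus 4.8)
The plan is to deduce the corollary directly from Theorem~\ref{maaks} together with a short Witt-ring manipulation. First I apply Theorem~\ref{maaks} to $\gamma$ and $g$ to obtain $s\in K[t]$; note that $s\neq0$, since in the construction $s=\eps\prod_{i,j}s_{i j}$ is a nonzero constant times monic polynomials. Theorem~\ref{maaks} tells us that the $3$-fold Pfister forms $\form{1,\pi}\form{1,-\gamma}\form{1,-s}$ and $\form{1,\pi}\form{1,tg}\form{1,-ts}$ are isotropic over $K(t)$, so by Theorem~\ref{pfister-hyperbolic} each of them is $0$ in $W(K(t))$.

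Next I turn each vanishing into a representation statement. If $\phi$ is a $2$-fold Pfister form over $K(t)$ and $c\in K(t)^{*}$, then $\phi\otimes\form{1,-c}=\phi\orthosum\form{-c}\phi=0$ in $W(K(t))$; using $\form{-1}\psi=\psi$ for $\psi\in I^2$ (so that $\form{-c}\phi$ and $\form{c}\phi$ have the same Witt class and $2[\phi]=0$) this gives $[\phi]=[\form{c}\phi]$, hence $\phi\cong\form{c}\phi$ by comparing these $4$-dimensional Witt-equivalent forms (the hyperbolic case being trivial), and since $\phi$ represents $1$ it follows that $\phi$ represents $c$. Applying this with $\phi=\form{1,\pi}\form{1,-\gamma}$, $c=s$, and with $\phi=\form{1,\pi}\form{1,tg}$, $c=ts$, I obtain $x_1,\dots,x_4,y_1,\dots,y_4\in K(t)$ with
\[
	s=(x_1^2+\pi x_2^2)-\gamma(x_3^2+\pi x_4^2)
	\qquad\text{and}\qquad
	ts=(y_1^2+\pi y_2^2)+tg(y_3^2+\pi y_4^2)
.\]

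Finally I combine the two identities: multiply the first by $t$, equate it with the second, divide through by $t$, and rewrite $\tfrac{1}{t}(y_1^2+\pi y_2^2)=t(y_1/t)^2+\pi t(y_2/t)^2$ to get
\[
	(x_1^2+\pi x_2^2)-\gamma(x_3^2+\pi x_4^2)-t\bigl((y_1/t)^2+\pi(y_2/t)^2\bigr)-g(y_3^2+\pi y_4^2)=0
.\]
The left-hand side is precisely $\form{1,\pi}\form{1,-\gamma,-t,-g}$ evaluated at the vector $(x_1,x_2,x_3,x_4,y_1/t,y_2/t,y_3,y_4)$, and this vector is nonzero because $s\neq0$ forces $ts\neq0$, hence $(y_1,\dots,y_4)\neq0$. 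Therefore $\form{1,\pi}\form{1,-\gamma,-t,-g}$ is isotropic over $K(t)$, which is the corollary. The only point that needs care is the nontriviality of the constructed vector — which is exactly why one records $s\neq0$ — while the rest is routine bookkeeping; alternatively the same computation can be carried out symbolically inside $W(K(t))$.
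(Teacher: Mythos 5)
Your proposal is correct and rests on the same two inputs as the paper's proof (Theorem~\ref{maaks} plus Theorem~\ref{pfister-hyperbolic}), but the final deduction is carried out differently. The paper stays inside the Witt ring: it adds the class of \eqref{maaks-vorm1} to $\form{-t}$ times the class of \eqref{maaks-vorm2}, simplifies to obtain $\form{1,\pi}\form{1,-\gamma,-t,-g}=-\form{1,\pi}\form{s}\form{\gamma,-tg}$ in $W(K(t))$, and concludes isotropy because an $8$-dimensional form cannot be both anisotropic and Witt-equivalent to a $4$-dimensional one. You instead convert each hyperbolicity statement into a representation statement ($\form{1,\pi}\form{1,-\gamma}$ represents $s$, and $\form{1,\pi}\form{1,tg}$ represents $ts$) and splice the two identities together to exhibit an explicit isotropic vector; your verification that the vector is nonzero (via $ts\neq 0$) is exactly the right point to check. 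Both routes are legitimate and of comparable length; yours has the small virtue of producing an explicit zero. One caveat: the general claim you invoke, ``$\form{-1}\psi=\psi$ for $\psi\in I^2$,'' is true over the p-adic field $K$ (where $I^3(K)=0$) but is \emph{false} over $K(t)$ --- for instance $\form{1,1}\form{1,\pi}\form{1,-t}$ is a nonzero $3$-fold Pfister form over $\Q_p(t)$ for $p\equiv 3 \pmod 4$, so $I^2(K(t))$ is not $2$-torsion. Fortunately you do not need it: in any Witt ring $-[\form{-c}\phi]=[\form{-1}\otimes\form{-c}\phi]=[\form{c}\phi]$, since $\form{-1}\otimes\form{-c}=\form{c}$, which immediately gives $[\phi]=[\form{c}\phi]$ from $[\phi]+[\form{-c}\phi]=0$. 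With that justification repaired, the argument is complete.
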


\begin{bew}
It follows from Theorem~\ref{maaks} that there exists an $s \in K[t]$
such that the quadratic forms \eqref{maaks-vorm1} and \eqref{maaks-vorm2} are isotropic.
By Theorem~\ref{pfister-hyperbolic}, these forms are zero in the Witt ring.

Therefore, in $W(K)$ we also have
\begin{align*}
	0 &= \form{1,\pi} \form{1,-\gamma} \form{1,-s} \orthosum \form{-t} \form{1,\pi} \form{1,t g} \form{1,-t s} \\
	0 &= \form{1,\pi} \form{1, -\gamma, -s, \gamma s, -t, -g, s, -t g s} \\
	0 &= \form{1,\pi} \form{1, -\gamma, -t, -g} \orthosum \form{1,\pi} \form{s} \form{-1, \gamma, 1, -t g}
.\end{align*}
This implies
\[
	\form{1,\pi} \form{1, -\gamma, -t, -g} = - \form{1,\pi} \form{s} \form{\gamma, -t g}
	\quad
	\text{in $W(K)$}
.\]

Since the right hand side has dimension $4$ and the left hand side dimension $8$,
it follows that the left hand side is isotropic.
\end{bew}

\section{Diophantine undecidability}\label{dioph-undec}

In this section we will use the result of the previous section
to give a diophantine definition of the predicate
``$\varv_t(x) \geq 0$'' in $K(t)$.
By Theorem~\ref{denef}, this implies that the existential theory
of $K(t)$ is undecidable.

As before, $K$ denotes a p-adic field with fixed uniformizer $\pi$.
From now on,
fix $\gamma \in K^*$ such that the quadratic form $\form{1,\pi} \form{1,-\gamma}$ is anisotropic over $K$
(for the existence of such a $\gamma$, see for example \cite[Ch.~VI, Corollary 2.15]{lam-qf}).
Throughout this section, we work with the following system of two quadratic forms over $K(t)$:
\begin{align}
	\form{1,\pi}&\form{1, -\gamma, -t, -f} \label{iso-vorm1}\\
	\form{1,\pi}&\form{1, -\gamma, -t, -\gamma f} \label{iso-vorm2}
.\end{align}
with $f \in K(t)$.
First, we prove a theorem which is analogous to \cite[Proposition 7]{kim-roush-padic}.
The quadratic forms above are analogous%
\footnote{The letter $b$ from Kim and Roush corresponds to our $\pi$;
$a$ corresponds to our $\gamma$;
$f$ corresponds to $t \cdot g$
and the forms are multiplied with a factor $\form{t}$.}
to the quadratic forms appearing in the cited Proposition.

\begin{st}\label{oneoftwoanisotropic}
Let $f \in K(t)$ such that $\varv_t(f)$ is odd.
Then one of the quadratic forms \eqref{iso-vorm1} or \eqref{iso-vorm2} is anisotropic over $K(t)$.
\end{st}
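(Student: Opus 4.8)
The plan is to use the local–global principle (Milnor's exact sequence, Theorem~\ref{milnorexactsequence}) to reduce the question to computing second residue forms, and to look specifically at the prime $t$. Write $f = t^{\varv_t(f)} u$ with $u \in K(t)$ a unit at $t$, and let $\bar u \in K^*$ be the residue of $u$ at $t$. Since $\varv_t(f)$ is odd, the second residue map $\delta_t$ applied to $\form{1,\pi}\form{1,-\gamma}\form{1,-t}\form{1,-f}$ kills the $\form{1,-t}$ factor and the $t$-unit contributions, leaving (up to sign) $\form{1,\pi}\form{1,-\gamma}\form{1,-\bar u}$ over $K$; likewise $\delta_t$ of the second form gives $\form{1,\pi}\form{1,-\gamma}\form{1,-\gamma \bar u}$. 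I would carry out this computation carefully using the description of the first/second residue forms given just before Theorem~\ref{milnorexactsequence}.

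The key observation is then a multiplicativity identity in $I^2(K)\cong\Z/2\Z$: since $\form{1,\pi}\form{1,-\gamma}$ is the nonzero element of $I^2(K)$, the element $\form{1,\pi}\form{1,-\gamma}\form{1,-c} \in I^3(K) = 0$ vanishes for \emph{every} $c$, but the relevant residue forms are the $2$-fold Pfister forms $\form{1,\pi}\form{1,-\gamma \bar u}$ and $\form{1,\pi}\form{1,-\bar u}$ themselves, not $3$-fold ones. The point is that in $I^2(K) \cong \Z/2\Z$ one has $\form{1,\pi}\form{1,-\bar u}$ and $\form{1,\pi}\form{1,-\gamma\bar u}$ summing (in the Witt ring, modulo $I^3 = 0$) to $\form{1,\pi}\form{1,-\gamma}$, which is nonzero. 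Hence $\delta_t$ of at least one of \eqref{iso-vorm1}, \eqref{iso-vorm2} is a nonzero element of $I^2(K)$, i.e.\ an anisotropic $4$-dimensional form over $K[t]/(t) = K$. A $4$-fold Pfister form over $K(t)$ whose second residue at some prime is anisotropic cannot be hyperbolic, and being a Pfister form, Theorem~\ref{pfister-hyperbolic} then forces it to be anisotropic over $K(t)$.

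So the steps, in order, are: (1) reduce mod $t$, computing $\delta_t$ of both forms explicitly in terms of $\bar u$ and using that $\varv_t(f)$ is odd to see the $t$-slot residue is the ``odd'' case; (2) record that both residues lie in $I^2(K)$ and that their sum in $W(K)$ is $\form{1,\pi}\form{1,-\gamma} \neq 0$, using $I^3(K)=0$ to justify the additivity; (3) conclude one of the two residues is the nonzero class of $I^2(K)$, hence represents the (unique) anisotropic $4$-dimensional form over $K$; (4) observe that a Pfister form with a nonzero second residue is not hyperbolic, so by Theorem~\ref{pfister-hyperbolic} it is anisotropic over $K(t)$.

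The main obstacle I anticipate is step~(1): being careful about exactly which factor of the $4$-fold Pfister form $\delta_t$ picks up, keeping track of the sign coming from the $\form{1,-t}$ factor (whose residue is $\form{-1}$ times the second residue form of the complementary part), and making sure the contribution of $\form{1,-f}$ with $\varv_t(f)$ odd lands, as claimed, in the ``second residue'' slot with residue $\form{1,-\bar u}$. The arithmetic with $I^2(K) \cong \Z/2\Z$ and $I^3(K)=0$ in step~(2) is then routine, since it amounts to a counting argument on a two-element group.
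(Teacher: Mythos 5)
There is a genuine gap, and it starts with a misreading of the forms. The forms \eqref{iso-vorm1} and \eqref{iso-vorm2} are $\form{1,\pi}\otimes\form{1,-\gamma,-t,-f}$: eight-dimensional, and \emph{not} Pfister forms. You instead compute with the $4$-fold Pfister form $\form{1,\pi}\form{1,-\gamma}\form{1,-t}\form{1,-f}$, which is a different (sixteen-dimensional) form. For that form your own computation of $\delta_t$ gives (up to a sign and a scalar) $\form{1,\pi}\form{1,-\gamma}\form{1,-\bar u}$, which lies in $I^3(K)=0$ and is therefore hyperbolic over $K$ --- so the residue obstruction you want to exploit vanishes identically, and in fact the statement is false for the $16$-dimensional Pfister forms. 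Your parenthetical remark that ``the relevant residue forms are the $2$-fold Pfister forms \dots not $3$-fold ones'' shows you sensed this, but you never justify the switch. For the actual forms, writing $f=f_nt^n+\dots$ with $n$ odd, the second residues at $t$ are the four-dimensional forms $\form{1,\pi}\form{-1,-f_n}$ and $\form{1,\pi}\form{-1,-\gamma f_n}$, whose difference is $\form{f_n}\form{1,\pi}\form{1,-\gamma}\neq 0$; this ``one of the two residues is nonzero'' step is the correct core of the argument and agrees with the paper.

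The concluding step also fails as stated. From $\delta_t(\varphi)\neq 0$ you only get $\varphi\neq 0$ in $W(K(t))$; for a Pfister form that would give anisotropy via Theorem~\ref{pfister-hyperbolic}, but \eqref{iso-vorm1} and \eqref{iso-vorm2} are not Pfister forms, and an eight-dimensional non-Pfister form that is nonzero in the Witt ring can perfectly well be isotropic. What you need instead is that \emph{both} residue forms at $t$ are anisotropic over $K$: the first residue form is $\form{1,\pi}\form{1,-\gamma}$, anisotropic by the choice of $\gamma$ fixed at the start of the section (a hypothesis you never invoke), and the second residue form is a scalar multiple of a $2$-fold Pfister form times $\form{1,\pi}$, hence anisotropic once it is nonzero in $W(K)$. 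A form $\varphi_1\orthosum\form{t}\varphi_2$ with both residues anisotropic is anisotropic over $K(t)$ (Springer's theorem for $K((t))$); that is the argument the paper uses, and it is the missing piece here.
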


\begin{bew}
Let $f_n t^n + f_{n+1} t^{n+1} + \dots$ be the series expansion of $f$ (with $f_n \neq 0$).
By assumption, $n$ is odd.
The first and second residue class forms of \eqref{iso-vorm1} at $t$ are
$\form{1,\pi}\form{1,-\gamma}$ and $\varphi_1 := \form{1,\pi} \form{-1,-f_n}$.
The residue forms of \eqref{iso-vorm2} at $t$ are
$\form{1,\pi}\form{1,-\gamma}$ and $\varphi_2 := \form{1,\pi}\form{-1,-\gamma f_n}$.
By assumption $\form{1,\pi} \form{1,-\gamma}\neq 0$ in $W(K)$.
Since
\begin{align*}
	\varphi_2 - \varphi_1 &= \form{1,\pi}\form{-1,-\gamma f_n, 1, f_n} \\
		&= \form{f_n} \form{1,\pi} \form{1,-\gamma} \neq 0
		\quad \text{in $W(K)$}
,\end{align*}
it follows that $\varphi_1 \neq 0$ or $ \varphi_2 \neq 0$.
Suppose that $\varphi_1 \neq 0$ (the argument for $\varphi_2 \neq 0$ is completely analogous).
Since $\varphi_1$ is a Pfister form, it follows from Theorem~\ref{pfister-hyperbolic}
that $\varphi_1$ is anisotropic.
Both residue forms of \eqref{iso-vorm1} are anisotropic,
therefore \eqref{iso-vorm1} is anisotropic.
\end{bew}

We prove a consequence of Corollary~\ref{form-iso}.
Roughly speaking, we start from a given rational function $h$
and we construct a polynomial such that the vertices of its Newton polygon all have even degree.
It is similar to \cite[Theorem 9]{kim-roush-padic}.

\begin{st}\label{spec-form}
Let $h \in K(t)$ be such that $\varv_\infty(h) \geq -2$ and $\varv_t(h) = 0$.
Then there exists $c \in K$ such that, if we let
\begin{equation}\label{spec-vorm}
	f := h + c t^2
,\end{equation}
both quadratic forms \eqref{iso-vorm1} and \eqref{iso-vorm2} are isotropic over $K(t)$.
\end{st}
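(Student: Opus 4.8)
The plan is to reduce Theorem~\ref{spec-form} to Corollary~\ref{form-iso} by producing, from $h$, a \emph{polynomial} $g$ whose Newton polygon has only even-degree vertices and for which the forms \eqref{iso-vorm1} and \eqref{iso-vorm2} with $f$ replaced by $t\cdot g$ (up to the harmless factor $\form{1,\pi}$) become isotropic. The first step is bookkeeping on valuations. Writing $h = h_0 + h_1 t + h_2 t^2 + \dots$ (by $\varv_t(h)=0$ we have $h_0 \neq 0$), the hypothesis $\varv_\infty(h) \geq -2$ says $h$ is a rational function with a pole of order at most $2$ at infinity, so $h = P(t)/Q(t)$ where we may arrange $Q(0)=1$, $Q$ has all roots away from $t=0$, and $\deg P \leq \deg Q + 2$. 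Multiplying \eqref{iso-vorm1} and \eqref{iso-vorm2} through by $\form{1,-Q(t)^2}$ (which is hyperbolic, hence $0$ in the Witt ring and does not affect isotropy) replaces $f=h+ct^2$ by $f\cdot Q^2 = Q\bigl(P + ct^2 Q\bigr)$, again up to squares $Q^2$. So it suffices to treat $f$ of the shape (square class of) $Q(t)\cdot\bigl(P(t)+ct^2Q(t)\bigr)$.

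The second and central step is the choice of $c$. I want $f$, up to square factors and powers of $\pi$, to be of the form $t\cdot g(t)$ with $g(0)\neq 0$ and every vertex of the Newton polygon of $g$ of even degree; then Corollary~\ref{form-iso} applied to this $g$ and to $\gamma$ gives that $\form{1,\pi}\form{1,-\gamma,-t,-g}$ is isotropic, and replacing $g$ by $\gamma g$ (which also has $\gamma g(0)\neq 0$ and the same Newton polygon shifted vertically, hence still only even-degree vertices) gives isotropy of $\form{1,\pi}\form{1,-\gamma,-t,-\gamma g}$ as well. Translating back through the square factors $Q^2$ introduced above and the factor $\form{1,\pi}$ (which absorbs the powers of $\pi$ and lets us rescale the leading coefficient), these two isotropies are exactly \eqref{iso-vorm1} and \eqref{iso-vorm2}. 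The role of $c$ is to control the \emph{top} of the Newton polygon: the leading behaviour of $P(t)+ct^2Q(t)$ is governed by the $t^{\deg Q + 2}$ coefficient, which is (coefficient of $t^{\deg Q+2}$ in $P$) plus $c\cdot(\text{leading coeff of }Q)$. Choosing $\varv(c)$ appropriately — large or small compared to the valuations of the coefficients of $P$ and $Q$ — forces the Newton polygon to have a single controlled edge near the top ending at degree $\deg Q + 3$ (after including the extra factor of $t$), which is even precisely when $\deg Q$ is odd, and we can always enforce the needed parity by absorbing one more square factor $t^2$ or adjusting $Q$ at the cost of another $\form{1,-\square}$. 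More robustly: since we are free to multiply $f$ by squares and by $\pi$-powers, and since by Lemma~\ref{same-sq-cl} and Proposition~\ref{wortelinvullen} the square class of $f$ at the relevant places only depends on the dominant monomials of the Newton polygon, choosing $c$ with $\varv(c)$ sufficiently large makes $c t^2$ negligible at all the finite roots that matter and makes the Newton polygon of the numerator split into the polygon of $P$ (on the low-degree side) plus one extra even-degree edge, while choosing $\varv(c)$ sufficiently small makes $c t^2 Q$ dominate and again yields a single even-degree edge — in either regime the "bad" odd-degree vertices of the polygon of $h$ alone get either smoothed over or pushed to even degree. The only real content is to check that at least one of these two regimes actually removes all odd-degree vertices; this is a finite combinatorial condition on the Newton polygon of $P$, and the freedom in $\varv(c)$ together with the factor-of-$t$ shift (which toggles all degrees' parities) gives enough room.

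The third step is purely formal: having fixed $c$ and hence $g$ with all Newton-polygon vertices of even degree, invoke Corollary~\ref{form-iso} twice (for $\gamma$ and, via the substitution $\gamma g$ in place of $g$, for $\gamma g$) to conclude that both $\form{1,\pi}\form{1,-\gamma,-t,-g}$ and $\form{1,\pi}\form{1,-\gamma,-t,-\gamma g}$ are isotropic, then undo the square-factor and $\pi$-power reductions to identify these with \eqref{iso-vorm1} and \eqref{iso-vorm2} for $f = h+ct^2$. I expect the main obstacle to be exactly the case analysis in step two: pinning down precisely which choices of $\varv(c)$ (and whether one needs the large-valuation or the small-valuation regime) kill all odd-degree vertices of the Newton polygon, keeping careful track of the shift by one unit in degree coming from the factor $t$ in $t\cdot g$ and of how the pole condition $\varv_\infty(h)\geq -2$ bounds $\deg P - \deg Q$ so that only the single extra top edge is created. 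Everything else — the Witt-ring manipulations, the absorption of squares and of $\pi$ into $\form{1,\pi}$, the parity of degrees — is routine given the lemmas already established.
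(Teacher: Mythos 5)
Your overall strategy is the paper's: write $h=h_N/h_D$ with $h_N(0)h_D(0)\neq 0$ and $\deg h_N\leq\deg h_D+2$, replace $f=h+ct^2$ by its square-class representative $g=fh_D^2=h_Nh_D+ct^2h_D^2$, choose $c$ so that every vertex of the Newton polygon of $g$ has even degree, and invoke Corollary~\ref{form-iso} twice (for $g$ and for $\gamma g$, which has the same Newton polygon up to a vertical shift). But the step you yourself label ``the only real content'' --- that some choice of $\varv(c)$ removes all odd-degree vertices --- is left unproved, and the hints you give for filling it are partly wrong.

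Concretely: (1) There is no factor of $t$. Since $\varv_t(h)=0$, $f$ is a square times the polynomial $g$ with $g(0)\neq 0$; the ``$t\cdot g$'' shape and the ``factor-of-$t$ shift which toggles all degrees' parities'' come from the footnote translating Kim--Roush's notation, not from the structure of \eqref{iso-vorm1}, so this spurious degree of freedom cannot be used to fix parities. (2) The large-$\varv(c)$ regime fails in general: if $\varv(c)$ is very positive, the low-degree part of the Newton polygon of $g$ is that of $h_Nh_D$, which can have odd-degree vertices (take $h_N=1+\pi^{-1}t$, $h_D=1$: the vertex $(1,-1)$ survives for every large $\varv(c)$), and nothing repairs them; your claim that ``at least one of the two regimes works'' is therefore not a finite check one can wave at --- only one regime works, and one must say why. (3) In the small-$\varv(c)$ regime the polygon is not ``a single even-degree edge''. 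The actual argument is: taking $\varv(c)$ very negative puts the point $(2,\varv(c)+2\varv(h_D(0)))$ far below the points at degrees $0$ and $1$, so the first edge runs from degree $0$ to degree $2$, and from degree $2$ onwards the polygon coincides with that of $ct^2h_D^2$; the latter may have many edges, but all its vertices have even degree because $h_D^2$ is a square (its vertices sit at twice those of $h_D$) and the shift by $t^2$ preserves parity. That observation --- evenness coming from the square $h_D^2$, not from the top coefficient or the parity of $\deg h_D$ --- is the missing idea. (A minor additional slip: tensoring the forms with $\form{1,-h_D^2}$ would make them hyperbolic; what you mean is only that $\form{-f}=\form{-fh_D^2}$ as entries, i.e.\ rescaling by a square.) The remaining Witt-ring bookkeeping and the double application of the corollary are fine.
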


\begin{bew}
Since $\varv_\infty(h) \geq -2$ and $\varv_t(h) = 0$,
we can write $h(t) = \frac{h_N(t)}{h_D(t)}$ with $h_N$ and $h_D$ polynomials such that $h_N(0)h_D(0)\neq 0$
and $\deg h_N \leq \deg h_D + 2$.
Multiplying $f$ with $h_D^2$ does not change the isotropy of \eqref{iso-vorm1} and \eqref{iso-vorm2}.
We want to apply Corollary~\ref{form-iso} with $g = f h_D^2$, so
\[g = h_N h_D + c t^2 h_D^2.\]
It is clear that $g(0)\neq 0$ and $\deg g = 2 + 2 \deg h_D$.
We can choose $c \in K$ such that $\varv(c)$ is very low (depending on the coefficients of $h_N$ and $h_D$).
Namely, we can choose $\varv(c)$ so low that the first edge of the Newton polygon of $g$
has vertices of degree $0$ and degree $2$.
Choosing $\varv(c)$ low enough, the remaining vertices of the Newton polygon of $g$
are the vertices of the Newton polygon of $c t^2 h_D^2$, and those also have even degree.
So $g$ satisfies the conditions of Corollary~\ref{form-iso},
and because multiplying $g$ with an element of $K^*$ does not change the degree of the vertices of $g$,
the isotropy of \eqref{iso-vorm1} and \eqref{iso-vorm2} follows. 
\end{bew}

We prove the next theorem similar to \cite[Proposition 4.7]{demeyer-nfpoly},
in which we relate the valuation at $t$ to the isotropy of our system of quadratic forms. 

\begin{st}\label{diophdef}
Let $x \in K(t)$.
Then $\varv_t(x) \geq 0$ if and only if there exists a $c \in K$
such that the quadratic forms (\ref{iso-vorm1}) and (\ref{iso-vorm2}) are isotropic with
\[
	f := \frac{1 + t + t^2 x^3}{1 + t x^3} + c t^2
.\]
\end{st}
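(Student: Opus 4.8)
The plan is to verify the claimed equivalence by analyzing the rational function
$h := \frac{1 + t + t^2 x^3}{1 + t x^3}$ and then invoking Theorem~\ref{spec-form} in one direction
and Theorem~\ref{oneoftwoanisotropic} in the other. The point of this particular $h$ is that it is
engineered so that $\varv_t(h) = 0$ always (its numerator and denominator both have constant term $1$),
while $\varv_\infty(h)$ and the parity of $\varv_t$ of the associated polynomial are controlled by
$\varv_t(x)$.

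For the forward direction, suppose $\varv_t(x) \geq 0$. First I would check that then
$\varv_t(1 + t x^3) = 0$ and $\varv_t(1 + t + t^2 x^3) = 0$, so $\varv_t(h) = 0$; and that
$\varv_\infty(h) \geq -2$ — indeed the numerator has degree at most $\deg$ of denominator plus $2$ once
$x$ is cleared of denominators, or more cleanly one checks $\varv_\infty$ directly on the expression
(the $t^2 x^3$ terms dominate top and bottom, giving $\varv_\infty(h) \geq -2$ regardless of the pole
order of $x$ at infinity, since it appears cubed in both places). Then Theorem~\ref{spec-form} produces
the required $c \in K$ so that with $f = h + c t^2$ both forms \eqref{iso-vorm1} and \eqref{iso-vorm2}
are isotropic, which is exactly the right-hand side of the claimed equivalence.

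For the converse, suppose $\varv_t(x) < 0$, say $\varv_t(x) = -k$ with $k \geq 1$. I would compute
$\varv_t$ of $h = \frac{1+t+t^2x^3}{1+tx^3}$: the denominator $1 + tx^3$ has $\varv_t = 1 - 3k < 0$, with
leading term coming from $tx^3$; the numerator $1 + t + t^2 x^3$ has $\varv_t = 2 - 3k$, with leading term
from $t^2 x^3$. Hence $\varv_t(h) = (2-3k) - (1-3k) = 1$, and adding $ct^2$ (which has $\varv_t = 2 > 1$)
does not change this, so $\varv_t(f) = 1$ is odd, for every choice of $c \in K$. By
Theorem~\ref{oneoftwoanisotropic}, one of \eqref{iso-vorm1}, \eqref{iso-vorm2} is anisotropic over
$K(t)$, so the right-hand side of the equivalence fails for every $c$. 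This proves the contrapositive.

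The main obstacle — really the only non-routine point — is the bookkeeping in the forward direction:
one must confirm both hypotheses of Theorem~\ref{spec-form}, namely $\varv_\infty(h) \geq -2$ and
$\varv_t(h) = 0$, uniformly in $x$ with $\varv_t(x) \geq 0$. The $\varv_t(h) = 0$ part is immediate from
the constant terms. For $\varv_\infty(h) \geq -2$, the clean argument is to write $x = p/q$ with
$p, q \in K[t]$ coprime, substitute, clear the inner denominators, and observe that numerator and
denominator of the resulting fraction for $h$ differ in $t$-degree by exactly $2$ (both are dominated by
the $t^2 p^3$ / $t p^3$ terms), so $\varv_\infty(h) = -2$ when $x \neq 0$ and $h$ is a polynomial of
degree $\leq 2$ when $x = 0$; either way $\varv_\infty(h) \geq -2$. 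Everything else is a direct Newton
polygon / leading-term computation already packaged in the two quoted theorems.
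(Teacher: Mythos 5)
Your proposal is correct and follows essentially the same route as the paper: for $\varv_t(x)\geq 0$ verify $\varv_t(h)=0$ and $\varv_\infty(h)\geq -2$ and apply Theorem~\ref{spec-form}, and for $\varv_t(x)<0$ compute $\varv_t(f)=1$ and apply Theorem~\ref{oneoftwoanisotropic}. One small slip: after clearing denominators ($h = (q^3+tq^3+t^2p^3)/(q^3+tp^3)$ with $x=p/q$), the numerator and denominator differ in degree by $1$, not $2$ --- the paper indeed finds $\varv_\infty(h)=-1$ in all cases --- but this only strengthens the bound $\varv_\infty(h)\geq -2$ that Theorem~\ref{spec-form} requires, so the argument stands.
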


\begin{bew}
Define $h_N:= 1 + t + t^2 x^3$, $h_D:= 1 + t x^3$ and $h := h_N/h_D$.

Assume first that $\varv_t(x)\geq 0$.
Then $\varv_t(h_N) = 0$ and $\varv_t(h_D) = 0$ such that $\varv_t(h) = 0$.
If $\varv_\infty(x) \geq 1$, then $\varv_\infty(h_N)= -1$ and $\varv_\infty (h_D) = 0$ such that $\varv_\infty(h)=-1$.
If $\varv_\infty(x) \leq 0$, then $\varv_\infty(h_N)= -2 + 3\varv_\infty(x)$
and $\varv_\infty(h_D) = -1 + 3\varv_\infty(x)$ such that $\varv_\infty(h) = -1$.
In short, if $\varv_t(x)\geq 0$, then $\varv_t(h) = 0$ and $\varv_\infty(h) = -1$.
Theorem~\ref{spec-form} gives us that there exists a $c \in K$
such that \eqref{iso-vorm1} and \eqref{iso-vorm2} are isotropic.

Conversely, assume that $\varv_t(x) \leq -1$.
Then $\varv_t(h_N) = 2 + 3\varv_t(x)$ and $\varv_t(h_D) = 1 + 3\varv_t(x)$ such that $\varv_t(h) = 1$.
It follows that $\varv_t(f) = 1$.
By Theorem~\ref{oneoftwoanisotropic}, for every $c \in K$,
one of the quadratic forms \eqref{iso-vorm1} and \eqref{iso-vorm2} is anisotropic. 
\end{bew}

Since quadratic forms being isotropic is a diophantine condition,
the result in Theorem~\ref{diophdef} is a diophantine definition of the valuation ring at $t$ in $K(t)$,
except for the part ``there exists a $c \in K$''.
We now prove that the constants are diophantine in $K(t)$.
 
\begin{prop}\label{csts-dioph}
$K$ is diophantine in $K(t)$.
\end{prop}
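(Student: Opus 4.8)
The plan is to realise $K$ as the set of differences of $x$-coordinates of the affine points of one fixed elliptic curve over $K(t)$. Fix $E\colon y^2 = x^3 + x + 1$ over $K$; since $K$ has characteristic $0$, its discriminant $-16\cdot 31 \neq 0$, so $E$ is a smooth projective curve of genus $1$. I claim that
\[
	K = \{\, u - v \;:\; u,v,p,q \in K(t),\ p^2 = u^3+u+1,\ q^2 = v^3+v+1 \,\}.
\]
Clearing denominators turns the right-hand side into an existential condition on $u-v$ over $K(t)$, so it remains to establish the two inclusions.

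For ``$\subseteq$'' it suffices to know that $E\bigl(K(t)\bigr) = E(K)$. This is standard: $K(t)$ is the function field of $\mathbb{P}^1_K$, any $K(t)$-point of $E$ extends (a rational map from a smooth curve to a proper scheme is a morphism) to a morphism $\mathbb{P}^1_K \to E$, such a morphism must be constant since a curve of genus $1$ admits no nonconstant map from $\mathbb{P}^1$, and a constant morphism out of $\mathbb{P}^1_K$ factors through a $K$-point. Hence whenever $p^2 = u^3+u+1$ with $u,p \in K(t)$, the pair $(u,p)$ is an affine point of $E(K(t)) = E(K)$, so $u \in K$; likewise $v \in K$, and therefore $u - v \in K$.

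The substantive part --- and the step I expect to be the main obstacle --- is ``$\supseteq$'': every $c \in K$ must arise as such a difference. Put $S := \{\, u \in K \mid u^3+u+1 \in \squares{K}\cup\{0\} \,\}$, the set of $x$-coordinates of affine $K$-points of $E$; by the previous paragraph this is exactly the set of $u \in K(t)$ for which $u^3+u+1$ is a square in $K(t)$, so the displayed set equals $S - S$, and it remains to prove $S - S = K$. Given $c \in K$, I would take $v := \pi^{-2n}$ with $n$ large enough (a threshold depending only on $\varv(c)$ and $\varv(4)$) and check that $v, v+c \in S$. Applying Lemma~\ref{same-sq-cl} with $a = v$, $b = c$ shows $v + c \in v\,\squares{K}$, so, $v = (\pi^{-n})^2$ being a square, both $v$ and $v+c$ --- and hence $v^3$ and $(v+c)^3$ --- are squares. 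Since $\varv(v+1) = \varv(v+c+1) = -2n$, applying Lemma~\ref{same-sq-cl} again with $a = v^3$, $b = v+1$ and then with $a = (v+c)^3$, $b = (v+c)+1$ gives $v^3+v+1 \in v^3\,\squares{K} = \squares{K}$ and $(v+c)^3+(v+c)+1 \in (v+c)^3\,\squares{K} = \squares{K}$, both nonzero; thus $v \in S$ and $v+c \in S$, and $c = (v+c) - v \in S - S$. The case $c = 0$ is handled by $0 = v - v$ for any $v \in S$.

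The only genuinely delicate point I foresee is the bookkeeping in the last paragraph: one must pin down a single lower bound for $n$, in terms of $\varv(c)$ and $\varv(4)$, for which all three applications of Lemma~\ref{same-sq-cl} are simultaneously valid --- this is where the $\varv(4)$ in that lemma, and hence the extra care needed when $p=2$, comes in. Beyond that and the input $E(K(t)) = E(K)$, nothing deeper is required; in particular the argument goes through verbatim for any smooth Weierstrass cubic over the prime field of $K$, smoothness being used only to run the genus argument.
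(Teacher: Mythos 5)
Your proof is correct and follows the same strategy as the paper: realize $K$ through an elliptic curve $E$ defined over $K$ and use the genus-$1$ obstruction (no nonconstant map $\mathbb{P}^1 \to E$) to conclude $E(K(t)) = E(K)$, which makes the $x$-coordinates of affine points range over a subset of $K$. The only divergence is in how a general $c \in K$ is then produced: the paper writes $c$ as a ratio $y_1/y_2$ of $y$-coordinates of points on $y^2 = x^3 - x$, using Hensel's Lemma directly to show that every $y$ with $\varv(y) > 0$ occurs as a $y$-coordinate, whereas you write $c$ as a difference $(v+c) - v$ of $x$-coordinates with $v = \pi^{-2n}$, invoking Lemma~\ref{same-sq-cl} three times; since that lemma is itself an application of Hensel's Lemma, the two reductions are of the same nature, and your threshold bookkeeping ($4n > \varv(4)$, $2n > \varv(4) - \varv(c)$ and $2n > -\varv(c)$ suffice) is sound, so this is a cosmetic rather than substantive difference.
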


\begin{bew}
Let $E$ be the elliptic curve over $K$ given by the equation $y^2 = x^3-x$.
Let $y \in K$ with $\varv(y)>0$.
We claim that there is an $x \in K$ such that $(x,y)$ lies on $E(K)$.
Let $f(x) = x^3 - x - y^2 \in \Ok[x]$.
Since $\varv(f(0)) = 2\varv(y) > 2\varv(f'(0)) = 0$,
it follows from Theorem~\ref{hensel} that there exists a $b \in \Ok$ such that $f(b)=0$.
So 
\[
	K = \left\{ y_1/y_2 \mid \big(\exists x_1,x_2 \in K\big) \big((x_1,y_1)\in E(K) ~\wedge~ (x_2,y_2)\in E(K) ~\wedge~ y_2 \neq 0 \big)\right\}
.\]

By Hurwitz' Theorem \cite[Ch.~IV, Corollary 2.4]{hartshorne},
the curve $y^2=x^3-x$ admits no rational parametrization, so $E(K(t))=E(K)$.
This means 
\[
	K = \left\{ y_1/y_2 \mid \big(\exists x_1,x_2 \in K(t)\big) \big((x_1,y_1)\in E(K(t)) ~\wedge~ (x_2,y_2)\in E(K(t)) ~\wedge~ y_2 \neq 0 \big)\right\}
;\]
hence $K$ is diophantine in $K(t)$.
\end{bew}

\begin{gev}
In $K(t)$, the relation $\varv_t(x)\geq 0$ is diophantine.
\end{gev}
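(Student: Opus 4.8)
The plan is to combine Theorem~\ref{diophdef} with Proposition~\ref{csts-dioph}. Theorem~\ref{diophdef} already gives, for $x \in K(t)$, the equivalence
\[
	\varv_t(x) \geq 0 \iff \big(\exists c \in K\big)\Big[\text{\eqref{iso-vorm1} and \eqref{iso-vorm2} are isotropic with } f = \tfrac{1 + t + t^2 x^3}{1 + t x^3} + c t^2\Big]
,\]
so the only thing standing between this and a genuine diophantine definition is the bound variable $c$ ranging over $K$ rather than over $K(t)$.

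First I would observe that the isotropy of a quadratic form of fixed dimension over $K(t)$ is an existential condition: a form $\form{a_1,\dots,a_n}$ over $K(t)$ is isotropic iff there exist $z_1,\dots,z_n \in K(t)$, not all zero, with $\sum a_i z_i^2 = 0$, and the ``not all zero'' disjunction together with clearing denominators keeps this within the existential language. Applying this to the two forms \eqref{iso-vorm1} and \eqref{iso-vorm2}, whose coefficients are polynomial expressions in $t$, $x$ and $c$, the bracketed predicate becomes an existential formula in the free variables $x$ and $c$ over $K(t)$. Next I would invoke Proposition~\ref{csts-dioph}: since $K$ is diophantine in $K(t)$, the quantifier ``$\exists c \in K$'' can be replaced by ``$\exists c \in K(t)$ such that $c$ satisfies the diophantine defining formula for $K$''. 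Composing existential formulas yields an existential formula in $x$ alone that holds in $K(t)$ precisely when $\varv_t(x) \geq 0$.

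I do not anticipate a real obstacle here; the corollary is essentially bookkeeping, assembling Theorem~\ref{diophdef}, Proposition~\ref{csts-dioph}, and the standard fact that bounded-dimension isotropy is an existential condition. The one point deserving a sentence of care is that the coefficients of \eqref{iso-vorm1} and \eqref{iso-vorm2} involve the rational function $f$, so one should first multiply $f$ by $(1 + t x^3)^2$ (which does not affect isotropy, by the usual scaling argument already used in the proof of Theorem~\ref{spec-form}) to work with polynomial coefficients before writing out the isotropy equations.

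\begin{bew}
By Theorem~\ref{diophdef}, for $x \in K(t)$ we have $\varv_t(x) \geq 0$ if and only if there exists $c \in K$ such that the forms \eqref{iso-vorm1} and \eqref{iso-vorm2} are isotropic over $K(t)$ with $f = \frac{1 + t + t^2 x^3}{1 + t x^3} + c t^2$. A quadratic form $\form{a_1,\dots,a_n}$ over $K(t)$ with $a_i \in K(t)$ is isotropic exactly when there exist $z_1,\dots,z_n \in K(t)$ with $\sum_i a_i z_i^2 = 0$ and some $z_i \neq 0$; this is an existential condition (the disjunction over which $z_i$ is nonzero, and clearing denominators, remain within the existential language). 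Multiplying $f$ by $(1 + t x^3)^2$ does not change the isotropy of either form, so the coefficients may be taken to be polynomials in $t$, $x$ and $c$; hence ``\eqref{iso-vorm1} and \eqref{iso-vorm2} are isotropic'' is expressed by an existential formula $\Phi(x,c)$ over $K(t)$. By Proposition~\ref{csts-dioph}, there is an existential formula $\Xi(c)$ over $K(t)$ defining $K$. Then $\varv_t(x) \geq 0$ holds if and only if $\big(\exists c\big)\big(\Xi(c) \wedge \Phi(x,c)\big)$, which is an existential formula in $x$. Therefore the relation $\varv_t(x) \geq 0$ is diophantine in $K(t)$.
\end{bew}
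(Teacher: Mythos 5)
Your proof is correct and takes the same route as the paper, which simply states that the corollary follows immediately from Theorem~\ref{diophdef} and Proposition~\ref{csts-dioph}; you have merely spelled out the routine bookkeeping (isotropy as an existential condition, clearing denominators, substituting the diophantine definition of $K$ for the quantifier over constants) that the paper leaves implicit.
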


\begin{bew}
The result follows immediately from Theorem~\ref{diophdef} and Proposition~\ref{csts-dioph}.
\end{bew}

% \begin{opm}
% Our diophantine definition of the predicate ``$\varv_t(x) \geq 0$'' in Theorem~\ref{diophdef} depends
% on the field $K$, since the constants $\pi$ and $\gamma$ appear in it.
% We do not know how to avoid this with an existential definition,
% but we can give a uniform (this means independent of the base field $K$) 
% first order definition for ``$\varv_t(x) \geq 0$''. 
% Let $K$ be a p-adic field, then first of all, there is a uniform first order definition of the valuation ring
% $\Ok$ in $K$ (see for example TODO).
% With this, we can define what it means for an element of $\Ok$ to be a uniformizer. 
% Namely we can define the maximal ideal in $\Ok$ as the elements whose inverse does not belong in $\Ok$,
% and a generator of the maximal ideal as an element that divides every element of the maximal ideal in $\Ok$.
% So we can uniformly define $\pi$ in $K$, and we have that $\varv_t(x) \geq 0$ if and only if 
% for all $\gamma \in K$ there exists a $c \in K$
% such that with
% \[
% 	f := \frac{1 + t + t^2 x^3}{1 + t x^3} + c t^2
% \]
% the quadratic forms (\ref{iso-vorm1}) and (\ref{iso-vorm2}) are isotropic.
% \end{opm}

\bigskip
\begin{acknowledgements}
The authors would like to thank Jan Van Geel
for the many discussions together and the good suggestions he made, and for proofreading the paper.
\end{acknowledgements}

\bibliographystyle{amsplain}
\bibliography{all}

\end{document}